\newtheorem{thm}{Theorem}[section]
\newtheorem{cor}[thm]{Corollary}
\newtheorem{lem}[thm]{Lemma}
\newtheorem{prop}[thm]{Proposition}
\theoremstyle{mydefinition}
\newtheorem{dfn}[thm]{Definition}
\theoremstyle{myremark}
\newtheorem{exa}[thm]{Example}
\newtheorem{Question}[thm]{Question}
\let\c@algorithm\c@thm
\numberwithin{algorithm}{section}
\def\Tm{\mathcal{T}_m}
\def\conv{\operatorname{conv}}
\renewcommand{\P}{{\mathbb{P}}}
\def\sgn{\mathrm{sgn}}
\title{The Sign Pattern Problem for Ehrhart Polynomials}
\author{Feihu Liu$^{\color{blue} \dag}$, Sihao Tao$^{\color{blue} \S}$\thanks{Corresponding author}, and Guoce Xin$^{\color{blue} \P}$
\\[2mm]
{\small $^{\color{blue} \dag, \S, \P}$ School of Mathematical Sciences,}\\[-0.8ex]
{\small Capital Normal University, Beijing, 100048, P.R.~China}\\
{\small {\color{blue} $^\dag$} Email address: liufeihu7476@163.com}\\
{\small {\color{blue} $^\S$} Email address: sihao\_tao@cnu.edu.cn}\\
{\small {\color{blue} $^\P$} Email address: guoce\_xin@163.com}
}
\date{\today}
\begin{document}

\maketitle

\begin{abstract}
We investigate the sign patterns of coefficients in the Ehrhart polynomial of the Cartesian product between the $r$-th pyramid over the Reeve tetrahedron and the hypercube $[0, n]^n$. This investigation yields partial results on the sign pattern problem for Ehrhart polynomials. Moreover, we show that for each dimension $d \geq 4$, there exists a $d$-dimensional integral polytope $\mathcal{P}$ such that arbitrarily many of the low-degree coefficients in the Ehrhart polynomial $i(\mathcal{P}, t)$ are negative, while all higher-degree coefficients are positive. Finally, we establish five embedding theorems that enable the sign pattern of a lower-dimensional integral polytope to be embedded into a higher-dimensional integral polytope in various ways. As an application, we completely resolve the Ehrhart coefficient sign pattern problem for dimensions $d = 7, 8, 9$.
\end{abstract}

\noindent
\begin{small}
\emph{2020 Mathematics subject classification}: Primary 52B20;  Secondary 52B05; 52B11
\end{small}

\noindent
\begin{small}
\emph{Keywords}: Integral polytope; Ehrhart polynomial; Ehrhart positive; Reeve tetrahedron; Sign pattern problem.
\end{small}

\section{Introduction}
An \emph{integral} convex polytope is defined as one whose vertices all have integer coordinates.
Let $\mathcal{P}$ be a $d$-dimensional integral convex polytope in $\mathbb{R}^n$.
The \emph{Ehrhart function}
\[
i(\mathcal{P},t) = |t\mathcal{P} \cap \mathbb{Z}^n|, \quad t=1,2,\ldots,
\]
counts the number of integer points in the $t$-th dilation $t\mathcal{P} = \{ t\alpha : \alpha \in \mathcal{P} \}$ of $\mathcal{P}$.

Ehrhart~\cite{Ehrhart62} established that $i(\mathcal{P},t)$ is a polynomial in $t$ of degree $d$ with constant term $1$; this is now known as the \emph{Ehrhart polynomial} of $\mathcal{P}$.
The coefficient of $t^d$ equals the (relative) volume of $\mathcal{P}$, while the coefficient of $t^{d-1}$ is half the boundary volume of $\mathcal{P}$ (see~\cite[Corollary 3.20; Theorem 5.6]{BeckRobins}).
Consequently, the coefficients of $t^d$ and $t^{d-1}$ in $i(\mathcal{P},t)$ are always positive.
The remaining coefficients, however, are more intricate and lack a straightforward geometric interpretation~\cite{McMullen77}.

An integral convex polytope $\mathcal{P}$ is called \emph{Ehrhart positive} (or is said to have \emph{Ehrhart positivity}) if all coefficients of its Ehrhart polynomial \(i(\mathcal{P},t)\) are nonnegative. In dimension $d=3$, a classical example lacking Ehrhart positivity is Reeve's tetrahedron (see~\cite{Reeve} or~\cite[Example 3.22]{BeckRobins}).
\begin{exa}{\em \cite[Example 3.22]{BeckRobins}}\label{ReeveTetrahedron}
Let $\mathcal{T}_m \subseteq \mathbb{R}^3$ be the tetrahedron with vertices $(0,0,0)$, $(1,0,0)$, $(0,1,0)$, and $(1,1,m)$, where $m$ is a positive integer. Its Ehrhart polynomial is given by
\begin{equation}\label{equ_tetrahedron}
i(\mathcal{T}_m,t) = \frac{m}{6}t^3 + t^2 + \frac{12-m}{6}t + 1.
\end{equation}
This tetrahedron is known as the \emph{Reeve tetrahedron}.
Observe that the linear coefficient is negative for $m \ge 13$.
\end{exa}

The study of Ehrhart positivity has attracted considerable attention.
Many families of polytopes have been shown to be Ehrhart positive, including hypersimplices \cite{Hypersimplices}, minimal matroids\cite{MinimalMatroids}, rank two matroids \cite{Rank-Two-Matroids}, cross-polytopes \cite[Section 2]{FuLiu19}, the $y$-families of generalized permutohedra \cite{PostnikovIMRN} which includes the case of Pitman-Stanley polytopes \cite{Stanley-Pitman}, and cyclic polytopes \cite{LiuFuA-M}.
For a comprehensive overview of Ehrhart positivity, we refer to Liu's survey \cite{FuLiu19}.
Many polytopes have also been proven to be non-Ehrhart positive, such as order polytopes \cite{LiuTsuchiya19,LiuXinZhang}, matroid polytopes \cite{MatroidsPolytope}, and smooth polytopes \cite{Smoothpolytopes}.
Next, we assume throughout that $d \geq 3$, since integral polytopes of dimensions $1$ and $2$ are always Ehrhart positive.

Inspired by the Reeve tetrahedron, Hibi, Higashitani, Tsuchiya, and Yoshida \cite{Hibi-Higashitani-Tsuchiya-Yoshida} investigated possible sign patterns of coefficients in Ehrhart polynomials, posing the following question:
\begin{Question}{\em \cite[Question 4.1]{Hibi-Higashitani-Tsuchiya-Yoshida}; \cite[Question 4.2.1]{FuLiu19}; \cite[Conjecture 4.3]{Ferroni23}}\label{QuestionHHTY}
Given $d \geq 3$ and integers $1 \leq i_1 < \cdots < i_q \leq d-2$, does there exist a $d$-dimensional integral polytope $\mathcal{P}$ such that the coefficients of $t^{i_1}, \ldots, t^{i_q}$ in $i(\mathcal{P}, t)$ are negative, while all other coefficients are positive?
\end{Question}
Hibi, Higashitani, Tsuchiya, and Yoshida provided partial answers:
\begin{thm}{\em \cite[Theorem 1.1; Theorem 1.2]{Hibi-Higashitani-Tsuchiya-Yoshida}}\label{Hibi-HTY-Results}
Let $d \geq 3$. Then:
\begin{enumerate}
    \item There exists a $d$-dimensional integral polytope $\mathcal{P}$ such that the coefficients of $t,t^2,\ldots,t^{d-2}$ in $i(\mathcal{P},t)$ are all negative.
    \item For each $1 \leq k \leq d-2$, there exists a $d$-dimensional integral polytope $\mathcal{P}$ such that the coefficient of $t^k$ in $i(\mathcal{P},t)$ is negative, and all other coefficients are positive.
\end{enumerate}
\end{thm}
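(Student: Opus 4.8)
\noindent\emph{Proof proposal.} The two ingredients I would build everything on are the product formula $i(\mathcal{P}\times\mathcal{Q},t)=i(\mathcal{P},t)\,i(\mathcal{Q},t)$ — specialised to $i\bigl(\mathcal{P}\times[0,N]^{e},t\bigr)=i(\mathcal{P},t)(Nt+1)^{e}$ — and the Reeve tetrahedron of Example~\ref{ReeveTetrahedron}, which I would rewrite as $i(\mathcal{T}_m,t)=1+t^{2}+\tfrac{m}{6}\,t(t-1)(t+1)$, so that it has a \emph{single} negative coefficient, the linear one, which is negative exactly for $m\ge 13$. Writing $[t^{i}]f$ for the coefficient of $t^{i}$ in a polynomial $f$, the idea is to treat $\mathcal{T}_m$ as the sole source of negativity, to \emph{amplify} it by taking $m$ large, and to \emph{relocate} it by multiplying with a factor $(Nt+1)^{e}$, which for $N$ large behaves like $N^{e}t^{e}$ and hence shifts every coefficient up by $e$ positions while creating no new sign changes.

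\noindent\emph{Part 1.} Take $\mathcal{P}=\mathcal{T}_m\times[0,N]^{d-3}$ (for $d=3$ there is nothing to do). Then
\[
i(\mathcal{P},t)=(1+t^{2})(Nt+1)^{d-3}+\tfrac{m}{6}\,(t^{3}-t)(Nt+1)^{d-3},
\]
and the $\tfrac{m}{6}$-part of $[t^{i}]\,i(\mathcal{P},t)$ equals $[t^{i}]\bigl((t^{3}-t)(Nt+1)^{d-3}\bigr)=N^{\,i-3}\bigl(\binom{d-3}{i-3}-N^{2}\binom{d-3}{i-1}\bigr)$, while the remaining part of $[t^{i}]\,i(\mathcal{P},t)$ is nonnegative. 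For each $i$ with $1\le i\le d-2$ the former is negative as soon as $N^{2}>\binom{d-3}{i-3}/\binom{d-3}{i-1}$, and the largest of these ratios over the range is attained at $i=d-2$, where it equals $\binom{d-3}{2}$; hence any $N$ with $N^{2}>\binom{d-3}{2}$, for instance $N=d$, works for all $i$ simultaneously. Fixing such an $N$ and letting $m\to\infty$ then drives $[t^{i}]\,i(\mathcal{P},t)\to-\infty$ for every $1\le i\le d-2$, whereas $[t^{0}]\,i(\mathcal{P},t)=1$ and the $\tfrac{m}{6}$-parts of $[t^{d-1}]$ and $[t^{d}]$ are the positive numbers $(d-3)N^{d-4}$ and $N^{d-3}$. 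So $\mathcal{P}$ with $N=d$ and $m$ large realises part 1.

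\noindent\emph{Part 2.} The case $k=d-2$ falls out of the \emph{same} polytope $\mathcal{P}=\mathcal{T}_m\times[0,N]^{d-3}$, read in the opposite regime: fix $m\ge 13$ and let $N\to\infty$. Then $[t^{i}]\,i(\mathcal{P},t)$ is governed by its top $N$-power, which is $\binom{d-3}{i}N^{i}>0$ when $i\le d-3$ and $\bigl([t^{\,i-(d-3)}]\,i(\mathcal{T}_m,t)\bigr)N^{d-3}$ when $i\ge d-2$; since among $[t^{1}],[t^{2}],[t^{3}]$ of $i(\mathcal{T}_m,t)$ only $[t^{1}]$ is negative, exactly one coefficient — the one at $t^{d-2}$ — is negative. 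For $2\le k\le d-2$ I would reduce to the case $k=1$: if $\mathcal{P}'$ is a $(d-1)$-dimensional integral polytope whose only negative Ehrhart coefficient is $[t^{k-1}]$, then for $N$ large the coefficients of $i(\mathcal{P}'\times[0,N],t)=i(\mathcal{P}',t)(Nt+1)$ are dominated by $N\,[t^{\,i-1}]\,i(\mathcal{P}',t)$ (with $[t^{0}]=1$), so $\mathcal{P}'\times[0,N]$ is $d$-dimensional with unique negative coefficient $[t^{k}]$. Iterating, all of part 2 follows once one has, \emph{for every dimension $D\ge 3$, a $D$-dimensional integral polytope whose Ehrhart polynomial has precisely its linear coefficient negative}.

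\noindent\emph{Main obstacle.} That base family is the crux. For $D=3$ it is the Reeve tetrahedron, but for $D\ge 4$ it is not produced by any of the obvious moves: products or lattice pyramids of Reeve tetrahedra with simplices or cubes, and even the generalised Reeve simplices $\conv\{0,e_{1},\dots,e_{D-1},(a_{1},\dots,a_{D-1},N)\}$ (whose $\delta$-vectors one reads off from the fundamental parallelepiped), all fail in the same way — amplifying the negativity pulls down not only $t$ but an entire initial block $t,t^{2},\dots,t^{\ell}$ of coefficients, with $\ell$ growing with $D$. This is the real contrast with part 1: ``all low coefficients negative'' is a robust, limiting condition, whereas ``exactly the linear one negative'' forces one to hold the amplifying parameter inside a bounded window and to engineer the $\delta$-vector (equivalently, the numbers of interior and boundary lattice points at each level) so that every coefficient above the first stays strictly positive while the first is pushed below zero. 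Producing such a $D$-polytope uniformly in $D$ — whether by an explicit lattice simplex, a carefully balanced Cartesian product, or a bootstrap from the lowest dimensions — is where the substance of the argument sits.
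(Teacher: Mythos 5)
Your Part 1 and the $k=d-2$ case of Part 2 are essentially correct (one small slip: the decomposition should read $i(\mathcal{T}_m,t)=(1+t)^2+\tfrac{m}{6}\,t(t-1)(t+1)$, not $1+t^2+\cdots$; the missing $2t$ does not harm the argument since both versions have nonnegative coefficients in the $m$-free part). Note, though, that the paper itself does \emph{not} prove this statement — it is imported verbatim as \cite[Thm.~1.1, Thm.~1.2]{Hibi-Higashitani-Tsuchiya-Yoshida} — so there is no ``paper proof'' to compare against; what you are doing is re-deriving a cited theorem, and that has to be judged on its own merits.

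Judged that way, Part 2 is genuinely incomplete, and you say so yourself. Your reduction ``multiply by $[0,N]$ with $N$ large to shift the unique negative coefficient up by one'' is sound (it is exactly one half of the paper's Embedding Theorem~I, Theorem~\ref{thm_Embedding Theorem}), but it only moves a known sign pattern \emph{upward}; it can never reach the base cases $(D,1)$ for $D\ge 4$ from $(3,1)$. Your direct construction handles only the diagonal $(d,d-2)$, which the reduction already reproduces from $(3,1)$, so you gain nothing new. The needed base family — a $D$-dimensional polytope, for every $D\ge 3$, whose Ehrhart polynomial has precisely the linear coefficient negative — is therefore a real hole, and your observation that the obvious moves fail is accurate: every construction of the form $\mathcal{T}_m\times[0,N]^{e}$, $\mathrm{Pyr}^k(\mathcal{T}_m)$, or a product thereof pushes a \emph{block} $t,t^2,\ldots,t^{\ell}$ of coefficients negative once $m$ crosses the threshold, not the linear coefficient alone. (This is exactly what Lemma~\ref{Posit-Delta-Negat} of the paper formalises: the sign-change index $\delta$ of the $h^*$-contribution grows with the dimension, so an $h^*$-vector of the form $1+(m-1)z^2$ cannot give a single negative coefficient once $d\ge 4$.) In HHTY the $k=1$ family is produced by a different, purpose-built lattice simplex whose $h^*$-vector is concentrated at a single, carefully chosen degree depending on both $d$ and $k$; this is precisely the ``engineering of the $\delta$-vector'' you correctly flag as the crux but do not supply. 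Until that base family is exhibited, your argument proves Part 1 and the single case $k=d-2$ of Part 2, but not Part 2 in full.
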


We call the coefficients of $t,t^2,\ldots,t^{d-2}$ in $i(\mathcal{P},t)$ the \emph{middle Ehrhart coefficients} of $\mathcal{P}$. Hibi et al. established that Question \ref{QuestionHHTY} has an affirmative answer for $3 \leq d \leq 6$ \cite[Proposition 4.2]{Hibi-Higashitani-Tsuchiya-Yoshida}. As noted in Liu's review \cite{FuLiu19}, Tsuchiya further proved that any sign pattern with at most three negative coefficients is realizable among the middle Ehrhart coefficients. Nevertheless, Question \ref{QuestionHHTY} remains open in general.

In this paper, we present three main contributions. First, we investigate sign patterns in the Ehrhart polynomial of the Cartesian product $\mathrm{Pyr}^r(\mathcal{T}_m) \times [0,n]^n$, where $\mathrm{Pyr}^r(\mathcal{T}_m)$ is the $r$-th pyramid over $\mathcal{T}_m$ (Definition \ref{Pyramid-over-P}). This yields partial progress toward Question~\ref{QuestionHHTY}, including the following:
\begin{thm}
Let $\mathcal{P} = \mathrm{Pyr}^r(\mathcal{T}_m) \times [0,n]^n$ be an $(n + r + 3)$-dimensional integral polytope with $n, r \geq 1$. For sufficiently large $m$, the coefficient of $t^j$ in $i(\mathcal{P},t)$ is negative for all $1 \leq j \leq n+1$.
\end{thm}

Our second main result is:
\begin{thm}
Given positive integers $\beta$ and $d$ with $d \geq \beta + 2$, there exists a $d$-dimensional integral polytope $\mathcal{P}$ such that:
\begin{itemize}
    \item The coefficients of $t^{d-2}, t^{d-3}, \ldots, t^{d-\beta-1}$ in $i(\mathcal{P}, t)$ are positive,
    \item The coefficients of $t^{d-\beta-2}, t^{d-\beta-3}, \ldots, t^1$ in $i(\mathcal{P}, t)$ are negative.
\end{itemize}
\end{thm}
The proof is technical; an analogous result is Theorem \ref{lem-more-positive}.

Third, we establish five embedding theorems (Theorems \ref{thm_Embedding Theorem}, \ref{Embedding-Them-II}, \ref{Embedding-III}, \ref{Embedding-IV}, and \ref{Embedding-V}) that allow sign patterns of lower-dimensional integral polytopes to be embedded into higher-dimensional ones via explicit constructions. These results are highly practical. As a first application, we affirm Question \ref{QuestionHHTY} for $d=7$.

For fixed $d \geq 3$, Hibi et al. verified $d-1$ of the $2^{d-2}$ possible sign patterns (Theorem \ref{Hibi-HTY-Results}), leaving $2^{d-2} - (d-1)$ cases unresolved. As a second application, the following theorem shows that verifying only a bounded number of cases suffices.
\begin{thm}
Let $d \geq 5$. Suppose Question~\ref{QuestionHHTY} has been verified for polytopes of dimensions $1$ through $d$. Then for $(d+1)$-dimensional polytopes, it suffices to verify $F_{d+1}$ cases, where $F_n$ satisfies the Fibonacci recurrence $F_n = F_{n-1} + F_{n-2}$ with initial conditions $F_6 = 1$ and $F_7 = 1$.
\end{thm}
Furthermore, we affirm Question \ref{QuestionHHTY} for $d=8$ and $d=9$.

The paper is organized as follows. Section 2 introduces basic definitions and derives the Ehrhart polynomial of $\mathrm{Pyr}^r(\mathcal{T}_m)$. Sections 3--6 present our main contributions.
Note that the results in sections 5 and 6 do not subsume those from the earlier sections.
We compute some specific Ehrhart polynomials using LattE \cite{DeLoera04}.

\section{Preliminary}
This section establishes definitions and results required for subsequent arguments.

\begin{dfn}
Let $\mathcal{P} \subseteq \mathbb{R}^n$ and $\mathcal{Q} \subseteq \mathbb{R}^m$ be convex polytopes. The \emph{Cartesian product} of $\mathcal{P}$ and $\mathcal{Q}$ is the convex polytope defined by
\[
\mathcal{P} \times \mathcal{Q} = \left\{ (x,y) \in \mathbb{R}^{n+m} \mid x \in \mathcal{P},\  y \in \mathcal{Q} \right\}.
\]
\end{dfn}

\begin{prop}{\em \cite[Lemma 2.3]{Hibi-Higashitani-Tsuchiya-Yoshida}}\label{prop_Cartesian}
Let $\mathcal{P} \subseteq \mathbb{R}^n$ and $\mathcal{Q} \subseteq \mathbb{R}^m$ be integral convex polytopes of dimensions $d_1$ and $d_2$, respectively. Then $\mathcal{P} \times \mathcal{Q}$ is an integral polytope of dimension $d_1 + d_2$. Moreover, its Ehrhart polynomial satisfies
\[
i(\mathcal{P} \times \mathcal{Q}, t) = i(\mathcal{P}, t) \cdot i(\mathcal{Q}, t).
\]
\end{prop}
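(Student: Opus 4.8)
The plan is to reduce the statement to the elementary identity $t(\mathcal{P}\times\mathcal{Q}) = (t\mathcal{P})\times(t\mathcal{Q})$, combined with Ehrhart's theorem that each of $i(\mathcal{P},t)$, $i(\mathcal{Q},t)$, and $i(\mathcal{P}\times\mathcal{Q},t)$ agrees with a polynomial in $t$ for $t\in\mathbb{Z}_{>0}$.

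First I would record the structural facts about the product polytope. The vertex set of $\mathcal{P}\times\mathcal{Q}$ is $\{(v,w): v \text{ a vertex of } \mathcal{P},\ w \text{ a vertex of } \mathcal{Q}\}$, a standard fact about products of polytopes; since $\mathcal{P}$ and $\mathcal{Q}$ are integral, every such $(v,w)$ lies in $\mathbb{Z}^{n+m}$, so $\mathcal{P}\times\mathcal{Q}$ is an integral polytope. For the dimension, the affine hull of $\mathcal{P}\times\mathcal{Q}$ is the product of the affine hulls of $\mathcal{P}$ and $\mathcal{Q}$, which has dimension $d_1+d_2$; equivalently, taking $d_1+1$ affinely independent points $a_0,\dots,a_{d_1}\in\mathcal{P}$ and $d_2+1$ affinely independent points $b_0,\dots,b_{d_2}\in\mathcal{Q}$, the points $(a_0,b_0)$, $(a_i,b_0)$ for $1\le i\le d_1$, and $(a_0,b_j)$ for $1\le j\le d_2$ form a set of $d_1+d_2+1$ affinely independent points of the product.

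Next comes the counting identity. Fix a positive integer $t$. A point $(x,y)\in\mathbb{R}^{n+m}$ lies in $t(\mathcal{P}\times\mathcal{Q})$ if and only if $(x/t,y/t)\in\mathcal{P}\times\mathcal{Q}$, i.e.\ $x/t\in\mathcal{P}$ and $y/t\in\mathcal{Q}$, i.e.\ $x\in t\mathcal{P}$ and $y\in t\mathcal{Q}$; and $(x,y)\in\mathbb{Z}^{n+m}$ if and only if $x\in\mathbb{Z}^n$ and $y\in\mathbb{Z}^m$. Hence the identity map gives a bijection
\[
t(\mathcal{P}\times\mathcal{Q})\cap\mathbb{Z}^{n+m} \;\longleftrightarrow\; \bigl(t\mathcal{P}\cap\mathbb{Z}^n\bigr)\times\bigl(t\mathcal{Q}\cap\mathbb{Z}^m\bigr),
\]
so $i(\mathcal{P}\times\mathcal{Q},t)=|t\mathcal{P}\cap\mathbb{Z}^n|\cdot|t\mathcal{Q}\cap\mathbb{Z}^m|=i(\mathcal{P},t)\,i(\mathcal{Q},t)$ for every $t\in\mathbb{Z}_{>0}$.

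Finally I would promote this to an identity of polynomials: since $\mathcal{P}\times\mathcal{Q}$ is an integral polytope, Ehrhart's theorem makes $i(\mathcal{P}\times\mathcal{Q},t)$ a polynomial in $t$, while $i(\mathcal{P},t)\,i(\mathcal{Q},t)$ is visibly a polynomial; the two polynomials agree on the infinite set $\mathbb{Z}_{>0}$, hence are equal. Comparing leading terms also re-confirms $\dim(\mathcal{P}\times\mathcal{Q})=d_1+d_2$ (equivalently via the volume interpretation of the top coefficient). There is no real obstacle here; the only points deserving explicit mention are the equality $t(\mathcal{P}\times\mathcal{Q})=(t\mathcal{P})\times(t\mathcal{Q})$ and the description of the vertices (equivalently, the affine hull) of a product of polytopes, both of which are routine.
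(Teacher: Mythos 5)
Your proof is correct, and indeed it is the standard argument. Note, though, that the paper does not actually give a proof of this proposition; it is stated as a citation of Lemma~2.3 of Hibi--Higashitani--Tsuchiya--Yoshida, so there is no in-paper proof to compare against. Your write-up supplies the missing argument cleanly: the factoring of the dilated lattice-point count via $t(\mathcal{P}\times\mathcal{Q})=(t\mathcal{P})\times(t\mathcal{Q})$ and $\mathbb{Z}^{n+m}=\mathbb{Z}^n\times\mathbb{Z}^m$, the vertex/affine-hull description of the product for integrality and dimension, and the passage from agreement on $\mathbb{Z}_{>0}$ to equality of polynomials via Ehrhart's theorem. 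The only thing worth being explicit about is that the paper's definition $i(\mathcal{P},t)=|t\mathcal{P}\cap\mathbb{Z}^n|$ uses the ambient lattice even for non-full-dimensional $\mathcal{P}$, and your bijection is precisely with respect to these ambient lattices, so the factorization is exactly compatible with the convention in use; you handle this correctly but it is the one point a careless reader might miss.
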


We introduce the following class of polytopes for use in later proofs.
\begin{exa}\label{Linear-GHCube}
Let $\ell_m = [0,m] = \{ \alpha \in \mathbb{R} \mid 0 \leq \alpha \leq m \}$ be the $1$-dimensional integral convex polytope (a closed interval). The $d$-dimensional \emph{generalized hypercube} is defined as $\ell_m^d = [0,m]^d$.
By Proposition \ref{prop_Cartesian}, its Ehrhart polynomial is
\[i(\ell_m^d, t) = (m t + 1)^d.\]
\end{exa}

\begin{dfn}\label{Pyramid-over-P}
Let $\mathcal{P} \subseteq \mathbb{R}^n$ be a polytope with vertices $v_1, \ldots, v_s$. The \emph{pyramid} over $\mathcal{P}$ is defined as
\[
\mathrm{Pyr}(\mathcal{P}) = \mathrm{conv}\left\{ (v_1,0),\ldots,(v_s,0), (0,\ldots,0,1) \right\} \subseteq \mathbb{R}^{n+1},
\]
where $\mathrm{conv}$ denotes the convex hull. For $k \geq 2$, the \emph{$k$-th pyramid} over $\mathcal{P}$, denoted by $\mathrm{Pyr}^k(\mathcal{P})$, is defined recursively as $\mathrm{Pyr}^k(\mathcal{P}) = \mathrm{Pyr}(\mathrm{Pyr}^{k-1}(\mathcal{P}))$.
\end{dfn}

The generating function
\[
\mathrm{Ehr}_{\mathcal{P}}(z) = 1 + \sum_{t \geq 1} i(\mathcal{P}, t) z^t
\]
is called the \emph{Ehrhart series} of $\mathcal{P}$. This can be expressed as
\[
\mathrm{Ehr}_{\mathcal{P}}(z) = \frac{h^*(z)}{(1-z)^{d+1}},
\]
where $\dim \mathcal{P} = d$ and $h^*(z)$ is a polynomial of degree at most $d$ (see \cite[Chapter 3.5]{BeckRobins}), known as the \emph{$h^*$-polynomial} of $\mathcal{P}$.

\begin{prop}{\em \cite[Theorem 2.4]{BeckRobins}}\label{EhrhartSeries-of-Pyr}
For an integral polytope $\mathcal{P} \subseteq \mathbb{R}^n$,
\[
i(\mathrm{Pyr}(\mathcal{P}), t) = 1 + \sum_{j=1}^{t} i(\mathcal{P}, j)
\quad \text{and} \quad
\mathrm{Ehr}_{\mathrm{Pyr}(\mathcal{P})}(z) = \frac{1}{1-z} \mathrm{Ehr}_{\mathcal{P}}(z).
\]
\end{prop}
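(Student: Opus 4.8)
The plan is to prove both identities together by slicing the dilated pyramid along its apex direction, and then reading off the generating function. First I would record a parametric description of the pyramid: writing a point of $\mathrm{Pyr}(\mathcal{P})$ as a convex combination $\sum_{i}\mu_i(v_i,0)+\mu_0(0,\dots,0,1)$ and putting $\lambda=\sum_i\mu_i=1-\mu_0$, one obtains
\[
\mathrm{Pyr}(\mathcal{P})=\bigl\{(\lambda y,\,1-\lambda):y\in\mathcal{P},\ \lambda\in[0,1]\bigr\}\subseteq\mathbb{R}^{n+1},
\]
the case $\lambda=0$ being the apex. Dilating by the positive integer $t$ yields $t\cdot\mathrm{Pyr}(\mathcal{P})=\{(t\lambda y,\,t(1-\lambda)):y\in\mathcal{P},\ \lambda\in[0,1]\}$.

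Next I would slice by the last coordinate. Since a point of $\mathbb{Z}^{n+1}$ has integer last coordinate and the last coordinate of any point of $t\cdot\mathrm{Pyr}(\mathcal{P})$ lies in $[0,t]$, the lattice points of $t\cdot\mathrm{Pyr}(\mathcal{P})$ are partitioned according to their last coordinate $k\in\{0,1,\dots,t\}$. For a fixed such $k$, imposing $1-\lambda=k/t$ forces $t\lambda=t-k$, so the height-$k$ slice of $t\cdot\mathrm{Pyr}(\mathcal{P})$ is exactly $(t-k)\mathcal{P}\times\{k\}$; hence it contains $|(t-k)\mathcal{P}\cap\mathbb{Z}^n|=i(\mathcal{P},t-k)$ lattice points when $t-k\ge1$, and $1=i(\mathcal{P},0)$ lattice points when $k=t$ (the dilated apex), using that the Ehrhart polynomial has constant term $1$. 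Summing over $k$ gives
\[
i(\mathrm{Pyr}(\mathcal{P}),t)=\sum_{k=0}^{t}i(\mathcal{P},t-k)=\sum_{j=0}^{t}i(\mathcal{P},j)=1+\sum_{j=1}^{t}i(\mathcal{P},j),
\]
which is the first identity.

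For the Ehrhart series I would substitute this into the definition and invoke the standard partial-sum identity for power series. Setting $a_j=i(\mathcal{P},j)$ for $j\ge0$ (so $a_0=1$ and $\mathrm{Ehr}_{\mathcal{P}}(z)=\sum_{j\ge0}a_jz^j$), the formula just derived gives
\[
\mathrm{Ehr}_{\mathrm{Pyr}(\mathcal{P})}(z)=\sum_{t\ge0}\Bigl(\sum_{j=0}^{t}a_j\Bigr)z^t=\Bigl(\sum_{t\ge0}z^t\Bigr)\Bigl(\sum_{j\ge0}a_jz^j\Bigr)=\frac{1}{1-z}\,\mathrm{Ehr}_{\mathcal{P}}(z),
\]
the middle equality being the Cauchy product of $\tfrac{1}{1-z}=\sum_{t\ge0}z^t$ with $\mathrm{Ehr}_{\mathcal{P}}(z)$; note that the $t=0$ term contributes $a_0=1$, matching the ``$1+\cdots$'' in the definition of the Ehrhart series.

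The only step that really needs care — and hence the main obstacle — is the slicing claim that the lattice points of $t\cdot\mathrm{Pyr}(\mathcal{P})$ at integer height $k$ are precisely those of $(t-k)\mathcal{P}\times\{k\}$, in particular handling the degenerate heights $k=0$ (where the slice is the base $t\mathcal{P}$) and $k=t$ (where it collapses to the apex and must be counted once, consistently with $i(\mathcal{P},0)=1$). Once this is established, both formulas follow by routine bookkeeping.
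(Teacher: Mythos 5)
The paper does not prove this proposition; it is quoted directly from Beck and Robins as Theorem 2.4, so there is no in-paper proof to compare against. Your slicing argument is correct and is in fact the standard proof given in that reference: you parametrize $\mathrm{Pyr}(\mathcal{P})$ by $(\lambda y, 1-\lambda)$, observe that the lattice points of $t\cdot\mathrm{Pyr}(\mathcal{P})$ at integer height $k\in\{0,\dots,t\}$ form the slice $(t-k)\mathcal{P}\times\{k\}$ with $i(\mathcal{P},t-k)$ lattice points (including the degenerate apex slice $k=t$, consistent with $i(\mathcal{P},0)=1$), and then obtain the generating-function identity by the Cauchy product of $\tfrac{1}{1-z}$ with $\mathrm{Ehr}_{\mathcal{P}}(z)$. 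Both steps are valid, and the bookkeeping at $t=0$ and $k=t$ is handled correctly.
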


\begin{prop}\label{Ehrhart-Pyr-Tm}
Let $\Tm$ be the Reeve tetrahedron from Example \ref{ReeveTetrahedron}.
Then the $k$-th pyramid $\mathrm{Pyr}^k(\Tm)$ over $\Tm$ is a $(k+3)$-dimensional integral polytope.
Its Ehrhart polynomial is given by
\[
i(\mathrm{Pyr}^k(\Tm), t) = \binom{t + k + 3}{k + 3} + (m - 1) \binom{t + k + 1}{k + 3}.
\]
\end{prop}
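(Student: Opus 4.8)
The plan is to pass to Ehrhart series, where the pyramid operation acts by a single factor of $(1-z)^{-1}$, so that the whole statement reduces to reading off a binomial coefficient. First I would dispose of the dimension and integrality claim: $\Tm$ is a $3$-dimensional integral polytope, the apex $(0,\dots,0,1)$ adjoined by $\mathrm{Pyr}$ has integer coordinates and raises the dimension by exactly one, so an easy induction on $k$ shows $\mathrm{Pyr}^k(\Tm)$ is a $(k+3)$-dimensional integral polytope.

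For the Ehrhart polynomial, I would first determine the $h^*$-polynomial of $\Tm$. Using the standard expansions $\sum_{t\geq 0}\binom{t+3}{3}z^t=(1-z)^{-4}$ and $\sum_{t\geq 0}\binom{t+1}{3}z^t=z^2(1-z)^{-4}$, a direct check that $\binom{t+3}{3}+(m-1)\binom{t+1}{3}$ equals the cubic in \eqref{equ_tetrahedron} gives
$$\mathrm{Ehr}_{\Tm}(z)=\frac{1+(m-1)z^2}{(1-z)^4}.$$
By Proposition~\ref{EhrhartSeries-of-Pyr}, each application of $\mathrm{Pyr}$ multiplies the Ehrhart series by $(1-z)^{-1}$, hence
$$\mathrm{Ehr}_{\mathrm{Pyr}^k(\Tm)}(z)=\frac{1+(m-1)z^2}{(1-z)^{k+4}}.$$
Expanding $(1-z)^{-(k+4)}=\sum_{t\geq 0}\binom{t+k+3}{k+3}z^t$ and $z^2(1-z)^{-(k+4)}=\sum_{t\geq 0}\binom{t+k+1}{k+3}z^t$, and extracting the coefficient of $z^t$, yields exactly $i(\mathrm{Pyr}^k(\Tm),t)=\binom{t+k+3}{k+3}+(m-1)\binom{t+k+1}{k+3}$.

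Alternatively, one can argue by induction on $k$ purely at the level of polynomials, using the identity $i(\mathrm{Pyr}(\mathcal{P}),t)=1+\sum_{j=1}^{t}i(\mathcal{P},j)$ from Proposition~\ref{EhrhartSeries-of-Pyr} together with the hockey-stick identity $\sum_{j=0}^{t}\binom{j+a}{a}=\binom{t+a+1}{a+1}$; the base case is the verification above that the proposed closed form for $\Tm$ reproduces \eqref{equ_tetrahedron}. There is no serious obstacle in either route; the only points needing care are confirming the numerator $1+(m-1)z^2$ from the given cubic and, in the second route, tracking the constant $+1$ in the pyramid recursion against the $-1$ produced by starting the hockey-stick sum at $j=1$ rather than $j=0$ (these cancel), as well as noting that the $j=0$ term $\binom{k}{k+2}$ in the second binomial sum vanishes.
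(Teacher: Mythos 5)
Your main argument — computing $h^*(\Tm)=1+(m-1)z^2$, applying Proposition~\ref{EhrhartSeries-of-Pyr} to get $\mathrm{Ehr}_{\mathrm{Pyr}^k(\Tm)}(z)=\frac{1+(m-1)z^2}{(1-z)^{k+4}}$, and reading off coefficients — is exactly the paper's proof, and it is correct. The alternative induction via the hockey-stick identity is a valid but unnecessary add-on; the paper does not use it.
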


\begin{proof}
Since $\Tm$ is a $3$-dimensional integral polytope, its $k$-th pyramid $\mathrm{Pyr}^k(\Tm)$ is a $(k+3)$-dimensional integral polytope. The Ehrhart series of $\Tm$ is
\[
\mathrm{Ehr}_{\Tm}(z) = 1 + \sum_{t \geq 1} i(\Tm, t) z^t = \frac{h^*(z)}{(1 - z)^4},
\]
where the $h^*$-polynomial of $\Tm$ can be computed by Equation~\eqref{equ_tetrahedron}:
\[
h^*(z) = (1 - z)^4 \left(1 + \sum_{t \geq 1} i(\Tm, t) z^t\right) = 1 + (m - 1)z^2.
\]
Applying Proposition \ref{EhrhartSeries-of-Pyr} iteratively, the Ehrhart series of $\mathrm{Pyr}^k(\Tm)$ is
\[
\mathrm{Ehr}_{\mathrm{Pyr}^k(\Tm)}(z) = \frac{\mathrm{Ehr}_{\Tm}(z)}{(1 - z)^k} = \frac{1 + (m - 1)z^2}{(1 - z)^{4 + k}}.
\]
Using the generating function expansion
\[
\frac{1}{(1 - z)^{d}} = \sum_{t \geq 0} \binom{t + d - 1}{d - 1} z^t,
\]
we obtain
\[
\frac{1 + (m - 1)z^2}{(1 - z)^{4 + k}} = \sum_{t \geq 0} \binom{t + k + 3}{k + 3} z^t + (m - 1) \sum_{t \geq 0} \binom{t + k + 1}{k + 3} z^t.
\]
Therefore, the Ehrhart polynomial of $\mathrm{Pyr}^k(\Tm)$ is
\[
i(\mathrm{Pyr}^k(\Tm), t) = \binom{t + k + 3}{k + 3} + (m - 1) \binom{t + k + 1}{k + 3}.
\]
Note that the binomial coefficient $\binom{t + k + 1}{k + 3}$ vanishes for $t < 2$, consistent with lattice point counts: at $t = 0$, we have $i(\mathrm{Pyr}^k(\Tm), 0) = 1$, and at $t = 1$, the count is $k + 4$, both agreeing with the polynomial expression.
\end{proof}

For $k = 0$, we recover the Ehrhart polynomial of $\Tm$:
\[
i(\mathrm{Pyr}^0(\Tm), t) = \binom{t + 3}{3} + (m - 1) \binom{t + 1}{3} = \frac{m}{6} t^3 + t^2 + \frac{12 - m}{6} t + 1.
\]
For $k = 1$, we obtain the Ehrhart polynomial of the pyramid over $\Tm$:
\[
i(\mathrm{Pyr}^1(\Tm), t) = \binom{t + 4}{4} + (m - 1) \binom{t + 2}{4} = \frac{m}{24} t^4 + \frac{m + 4}{12} t^3 + \frac{36 - m}{24} t^2 + \frac{26 - m}{12} t + 1.
\]

\section{Main Results I}
Before presenting our results, we establish the following lemma.

\begin{lem}\label{lem_B_j}
Let $b$ and $n$ be positive integers with $b \geq n$.
Define the expansion $(bt+1)^n = \sum_{k=0}^{n} B_k t^{k}$ where $B_k = \binom{n}{k} b^{k}$.
Then the sequence $(B_k)_{k=0}^{n-1}$ is strictly increasing, and $B_n \geq B_{n-1}$.
\end{lem}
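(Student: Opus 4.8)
The plan is to work directly with the closed form $B_k = \binom{n}{k} b^{k}$ and study the ratio of consecutive coefficients. Since every $B_k$ is a positive integer, proving that $(B_k)_{k=0}^{n-1}$ is strictly increasing amounts to showing $B_{k+1}/B_k > 1$ for $0 \le k \le n-2$, and proving $B_n \ge B_{n-1}$ amounts to showing $B_n/B_{n-1} \ge 1$. So the whole statement reduces to estimating a single ratio.

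First I would record the formula, valid for $0 \le k \le n-1$,
$$\frac{B_{k+1}}{B_k} = \frac{\binom{n}{k+1}}{\binom{n}{k}}\cdot b = \frac{n-k}{k+1}\, b.$$
Second, for the strictly increasing part I need $\dfrac{n-k}{k+1}\,b > 1$, i.e. $b(n-k) > k+1$, for every $k$ with $0 \le k \le n-2$. This is exactly where the hypothesis $b \ge n$ and the restricted range of $k$ enter: when $k \le n-2$ we have $n-k \ge 2$, so $b(n-k) \ge 2b \ge 2n$, whereas $k+1 \le n-1 < 2n$; hence $b(n-k) > k+1$ and $B_{k+1} > B_k$. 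This proves $B_0 < B_1 < \cdots < B_{n-1}$.

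Third, for the final comparison I would substitute $k = n-1$ into the ratio formula:
$$\frac{B_n}{B_{n-1}} = \frac{n-(n-1)}{(n-1)+1}\, b = \frac{b}{n} \ge 1,$$
again using $b \ge n$, with equality exactly when $b = n$. This gives $B_n \ge B_{n-1}$ and completes the proof.

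I do not expect a real obstacle; the argument is a short monotonicity computation. The only point requiring care is bookkeeping of strictness: the estimate must be kept strict for $k \le n-2$ (so that the sequence up to index $n-1$ is \emph{strictly} increasing), while the boundary index $k = n-1$ must be treated separately, where $b \ge n$ yields only the non-strict inequality $B_n \ge B_{n-1}$ that the statement asserts.
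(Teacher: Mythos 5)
Your proof is correct and follows essentially the same route as the paper: both arguments start from the closed form $B_k = \binom{n}{k}b^k$, pass to the ratio $B_{k+1}/B_k = \frac{n-k}{k+1}b$, use $b \ge n$ to handle the boundary case $B_n \ge B_{n-1}$, and establish strict increase for $k \le n-2$ by a short inequality (you use $n-k\ge 2$ to get $b(n-k)\ge 2n > k+1$; the paper instead argues from $n^2-1 > k(n+1)$, a purely cosmetic difference).
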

\begin{proof}
Since $b \geq n$, we have $B_n = b^n \geq n b^{n-1} = B_{n-1}$. For $0 \leq k < n-1$,
\[
\frac{B_{k+1}}{B_k} = \frac{\binom{n}{k+1} b^{k+1}}{\binom{n}{k} b^k} = \frac{n-k}{k+1} \cdot b \geq (n-k)\frac{n}{k+1}>1
\]
implies that $B_{k+1} > B_k$.
\end{proof}

For subsequent arguments, we often treat $B_k = 0$ for $k > n$ or $k < 0$.

\begin{thm}\label{One-Positive}
For $2\leq i\leq 4$, let $d(2)=3, d(3)=4, d(4)=12$.
Given any integer $d \geq d(i)$, there exists a $d$-dimensional integral polytope $\mathcal{P}$ such that the coefficient of $t^{d-2}, \ldots, t^{d-i}$ in $i(\mathcal{P},t)$ is positive, while the coefficient of $t^{j}$ is negative for all $1 \leq j < d-i$.
\end{thm}
\begin{proof}
Since Hibi et al. established the result for $3 \leq d \leq 6$ \cite[Proposition~4.2]{Hibi-Higashitani-Tsuchiya-Yoshida}, we consider $d \geq 7$.

\noindent
\textit{Case 1:} For the case $i=2$, construct the polytope
\[
\mathcal{P} = \mathrm{Pyr}^1(\Tm) \times \ell_n^n \quad \text{with} \quad n \geq 3.
\]
This yields an integral polytope of dimension $n+4 \geq 7$.

By Proposition~\ref{prop_Cartesian},
\[
i(\mathcal{P},t) = \left( \frac{m}{24} t^4 + \frac{m+4}{12} t^3 + \frac{36-m}{24} t^2 + \frac{26-m}{12} t + 1 \right) (nt+1)^n.
\]
Let $i(\mathcal{P},t) = \sum_{j=0}^{n+4} C_j t^j$ and $(nt+1)^n = \sum_{k=0}^{n} B_k t^k$. Then
\begin{align*}
C_j &= B_j + \frac{26-m}{12} B_{j-1} + \frac{36-m}{24} B_{j-2} + \frac{m+4}{12} B_{j-3} + \frac{m}{24} B_{j-4} \\
&= \left( B_j + \frac{13}{6} B_{j-1} + \frac{3}{2} B_{j-2} + \frac{1}{3} B_{j-3} \right)
 - \frac{m}{12} (B_{j-1} - B_{j-3}) - \frac{m}{24} (B_{j-2} - B_{j-4}).
\end{align*}
For $1 \leq j \leq n+2$ with $n \geq 3$ fixed, the term $B_j + \frac{13}{6} B_{j-1} + \frac{3}{2} B_{j-2} + \frac{1}{3} B_{j-3}$ is positive and independent of $m$.
By Lemma~\ref{lem_B_j}, for sufficiently large $m$, we have $C_j < 0$ for all $1 \leq j \leq n+1$.
Finally, we compute
\begin{align*}
C_{n+2} &= \frac{36-m}{24} B_n + \frac{m+4}{12} B_{n-1} + \frac{m}{24} B_{n-2} \\
&= \left( \frac{3}{2} B_n + \frac{1}{3} B_{n-1} \right) + \frac{m}{24} \left( B_n + \binom{n}{2} n^{n-2} \right) > 0,
\end{align*}
since $B_n = n^n$ and $B_{n-1} = n^n$.
This completes the proof for the case $i=2$.

\noindent
\textit{Case 2:} For the case $i=3$, construct
\[
\mathcal{P} = \mathrm{Pyr}^2(\Tm) \times \ell_n^n \quad \text{with} \quad n \geq 2,
\]
yielding an integral polytope of dimension $n+5 \geq 7$.

By Proposition \ref{prop_Cartesian},
\[
i(\mathcal{P},t) = \left( \frac{m}{120} t^5 + \frac{2+m}{24} t^4 + \frac{16+m}{24} t^3 + \frac{46-m}{24} t^2 + \frac{140-3m}{60} t + 1 \right) \cdot (nt+1)^n.
\]
Express $i(\mathcal{P},t) = \sum_{j=0}^{n+5} C_j t^j$ and $(nt+1)^n = \sum_{k=0}^{n} B_k t^k$. The coefficients satisfy
\begin{align*}
C_j &=
B_j + \frac{140-3m}{60} B_{j-1} + \frac{46-m}{24} B_{j-2} + \frac{16+m}{24} B_{j-3} + \frac{2+m}{24} B_{j-4} + \frac{m}{120} B_{j-5} \\
&= \left( B_j + \frac{7}{3} B_{j-1} + \frac{23}{12} B_{j-2} + \frac{2}{3} B_{j-3} + \frac{1}{12} B_{j-4} \right) \\
&\quad - \frac{m}{24} (B_{j-1} - B_{j-3}) - \frac{m}{120} (B_{j-1} - B_{j-5}) - \frac{m}{24} (B_{j-2} - B_{j-4}).
\end{align*}
For $1 \leq j \leq n+3$ with fixed $n \geq 3$, the expression in parentheses is positive and $m$-independent. By Lemma \ref{lem_B_j}, taking $m$ sufficiently large ensures $C_j < 0$ for $1 \leq j \leq n+1$. Finally, we confirm positivity for the highest coefficients:
\begin{align*}
C_{n+3} &= \left( \frac{2}{3} B_n + \frac{1}{12} B_{n-1} \right)
+ \frac{m}{24} B_n + \frac{m}{120} B_{n-1} + \frac{m}{24} B_{n-2} > 0, \\
C_{n+2} &= \left( \frac{23}{12} B_n + \frac{2}{3} B_{n-1} + \frac{1}{12} B_{n-2} \right)
+ \frac{m}{24} B_{n-2} + \frac{m}{120} B_{n-3} > 0.
\end{align*}
This completes the proof for the case $i=3$.

\noindent
\textit{Case 3:} For the case $i=4$, construct the polytope
\[
\mathcal{P} = \mathrm{Pry}^3(\Tm) \times \ell_n^n \quad \text{with} \quad n \geq 6,
\]
which is integral of dimension $d = n+6 \geq 12$. Proposition \ref{prop_Cartesian} gives
\begin{align*}
i(\mathcal{P},t) &= \Bigg[ \frac{m}{720} t^6 + \left( \frac{1}{60} + \frac{m}{80} \right) t^5 + \left( \frac{5}{24} + \frac{5m}{144} \right) t^4 + \left( 1 + \frac{m}{48} \right) t^3 \\
&\quad + \left( \frac{55}{24} - \frac{13m}{360} \right) t^2 + \left( \frac{149}{60} - \frac{m}{30} \right) t + 1 \Bigg] \cdot (nt+1)^n.
\end{align*}

Write $i(\mathcal{P},t) = \sum_{j=0}^{n+6} C_j t^j$ and $(nt+1)^n = \sum_{k=0}^{n} B_k t^k$. Then
\begin{align*}
C_j &=
B_j + \frac{149}{60} B_{j-1} + \frac{55}{24} B_{j-2} + B_{j-3} + \frac{5}{24} B_{j-4} + \frac{1}{60} B_{j-5} \\
&\quad - m \left( \frac{1}{30} B_{j-1} + \frac{13}{360} B_{j-2} - \frac{1}{48} B_{j-3} - \frac{5}{144} B_{j-4} - \frac{1}{80} B_{j-5} - \frac{1}{720} B_{j-6} \right).
\end{align*}
The $m$-independent part is positive for $1 \leq j \leq n+4$ when $n \geq 3$. The coefficient sum in the $m$-dependent term vanishes:
\[
-\frac{1}{30} - \frac{13}{360} + \frac{1}{48} + \frac{5}{144} + \frac{1}{80} + \frac{1}{720} = 0,
\]
reflecting that $t=1$ is a root of the polynomial $\binom{t+k+1}{k+3}$. By Lemma \ref{lem_B_j}, large $m$ ensures $C_j < 0$ for $1 \leq j \leq n+1$. Clearly $C_{n+4} > 0$ and $C_{n+3} > 0$. For $C_{n+2}$:
\begin{align*}
C_{n+2} &= \frac{55}{24} B_n + B_{n-1} + \frac{5}{24} B_{n-2} + \frac{1}{60} B_{n-3} \\
&\quad + m \left( -\frac{13}{360} B_n + \frac{1}{48} B_n + \frac{5}{144} \binom{n}{2} n^{n-2} + \frac{1}{80} \binom{n}{3} n^{n-3} + \frac{1}{720} \binom{n}{4} n^{n-4} \right) \\
&= \frac{55}{24} B_n + B_{n-1} + \frac{5}{24} B_{n-2} + \frac{1}{60} B_{n-3} + \frac{m}{17280} \left( 73n^n - 414n^{n-1} + 83n^{n-2} - 6n^{n-3} \right).
\end{align*}
The expression $73n^n - 414n^{n-1} + 83n^{n-2} - 6n^{n-3}$ is positive for $n \geq 6$, ensuring $C_{n+2} > 0$.
This completes the proof for the case $i=4$.
\end{proof}

However, the pattern identified in Theorem~\ref{One-Positive} fails to extend to the general case.
For instance, consider the integral polytope $\mathcal{P} = \mathrm{Pry}^4(\Tm) \times \ell_n^n$ of dimension $d = n + 7$.
Employing an argument analogous to the proof of Theorem~\ref{One-Positive}, we find that for sufficiently large $m$,
the coefficients of $t^{d-2}$, $t^{d-3}$, and $t^{d-4}$ in $i(\mathcal{P}, t)$ are positive,
while the coefficient of $t^j$ is negative for all $1 \leq j < d - 4$.
Consequently, no positive coefficients beyond those in Theorem~\ref{One-Positive} arise.

Nevertheless, a partial result can be established, see Theorem \ref{Partial-Result-Negative}. We first recall key concepts.
Given a sequence $(a_0, a_1, \ldots, a_n)$ of nonnegative real numbers, it is \emph{unimodal} if there exists $0 \leq i \leq n$ such that
$a_0 \leq \cdots \leq a_i \geq \cdots \geq a_n$. It is \emph{log-concave} if $a_i^2 \geq a_{i-1}a_{i+1}$ for all $1 \leq i \leq n-1$.
A polynomial $a_0 + a_1 t + \cdots + a_n t^n$ is \emph{real-rooted} if all its roots are real.
Comprehensive surveys on these properties are available in \cite{StanleyLog-concave}.

The following classical result, due to Newton, is essential:
\begin{lem}{\em \cite[P.~104]{Hardy-Littlewood}}\label{Realrooted-To-Unimod}
    Let $(a_0, a_1, \ldots, a_n)$ be nonnegative reals. If $a_0 + a_1 t + \cdots + a_n t^n$ is real-rooted,
    the sequence is log-concave and has no internal zeros; in particular, it is unimodal.
\end{lem}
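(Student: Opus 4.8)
The statement is Newton's inequalities together with a determination of which coefficients of $p(t)=a_0+a_1t+\cdots+a_nt^n$ can vanish, so the plan is threefold. First I would record that real-rootedness is preserved both by differentiation and by reversing the coefficient list. Then I would use these two operations to reduce the log-concavity inequality for any fixed triple of consecutive coefficients to the discriminant inequality for a quadratic. Finally I would use nonnegativity of the coefficients to show that $p$ has only nonpositive roots, which forces the nonzero coefficients to form a contiguous block and upgrades log-concavity to unimodality.

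For the two closure properties: if $f$ is real-rooted of degree $n$, then by Rolle's theorem $f'$ has a root strictly between any two distinct consecutive roots of $f$, while a root of $f$ of multiplicity $\mu\ge 2$ is a root of $f'$ of multiplicity $\mu-1$; counting shows $f'$ has $n-1$ real roots, so it is real-rooted. And if $f(x)=c\,x^{e}\prod_{j}(x-\rho_j)$ with every $\rho_j$ real and nonzero, then $x^{n}f(1/x)=c\prod_{j}(1-\rho_j x)$, whose roots $1/\rho_j$ are real; so reversal, which on the level of coefficients sends $\sum a_kx^k$ to $\sum a_{n-k}x^k$ (after discarding an overall power of $x$), preserves real-rootedness.

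Next, fix $i$ with $1\le i\le n-1$; we may assume $a_{i-1}a_{i+1}>0$, since otherwise $a_i^2\ge a_{i-1}a_{i+1}$ is trivial. Differentiate $p$ exactly $i-1$ times, reverse the result, then differentiate $n-i-1$ more times. By the closure properties this produces a real-rooted quadratic, and a direct computation shows its coefficients of $x^2,x^1,x^0$ are $a_{i-1}(i-1)!\,(n-i+1)!/2$, $a_i\,i!\,(n-i)!$, and $a_{i+1}(i+1)!\,(n-i-1)!/2$. Its discriminant being nonnegative yields, after cancelling the common factorials, $a_i^2\ge a_{i-1}a_{i+1}\cdot\tfrac{i+1}{i}\cdot\tfrac{n-i+1}{n-i}\ge a_{i-1}a_{i+1}$ — equivalently Newton's inequality $\big(a_i/\binom{n}{i}\big)^2\ge\big(a_{i-1}/\binom{n}{i-1}\big)\big(a_{i+1}/\binom{n}{i+1}\big)$. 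Hence $(a_k)$ is log-concave. I expect the only fiddly part of the whole argument to be keeping track of the factorial and binomial constants through this differentiate--reverse--differentiate reduction; conceptually nothing else is at stake.

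Finally, if $p$ had a root $\rho>0$ then $0=p(\rho)=\sum_k a_k\rho^k$ would be a sum of nonnegative numbers, forcing every $a_k=0$, contrary to $p\not\equiv 0$; hence all roots of $p$ are $\le 0$, so $p(x)=a_nx^m\prod_{j}(x+s_j)$ with $a_n>0$, $m\ge 0$, and each $s_j>0$. The coefficients of $\prod_j(x+s_j)$ are elementary symmetric functions of positive reals, hence strictly positive, so the nonzero coefficients of $p$ are exactly $a_m,a_{m+1},\dots,a_n$ — a contiguous block, i.e. the sequence has no internal zeros. On that block, log-concavity forces the ratios $a_{k+1}/a_k$ to be nonincreasing, so they exceed $1$ up to some index and are at most $1$ thereafter; thus $(a_k)$ rises and then falls, which is the claimed unimodality and completes the proof.
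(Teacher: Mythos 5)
The paper does not prove this lemma; it is stated as a classical fact and cited to Hardy--Littlewood--P\'olya, so there is no internal proof to compare against. Your argument is correct and is essentially the standard derivation found in the literature: differentiation and coefficient reversal preserve real-rootedness; applying ``differentiate $i-1$ times, reverse, differentiate $n-i-1$ times'' to $p$ produces a real-rooted quadratic whose discriminant inequality is precisely Newton's inequality $\bigl(a_i/\tbinom{n}{i}\bigr)^2\ge \bigl(a_{i-1}/\tbinom{n}{i-1}\bigr)\bigl(a_{i+1}/\tbinom{n}{i+1}\bigr)$, hence $a_i^2\ge a_{i-1}a_{i+1}$; and nonnegativity of the coefficients forces all roots to be nonpositive, so the nonzero coefficients form a contiguous block (elementary symmetric functions of positives), after which log-concavity gives unimodality via the nonincreasing ratios $a_{k+1}/a_k$. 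Your bookkeeping of the factorials in the quadratic's three coefficients is right, and the cancellation leading to the Newton form checks out.

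One small point worth flagging explicitly: the differentiate--reverse--differentiate step tacitly assumes $a_n\neq 0$ (so $p$ has exact degree $n$, the $(i-1)$-st derivative has constant term $a_{i-1}(i-1)!>0$, hence reversal preserves the degree, and the final output is a genuine quadratic with positive leading coefficient). This is the usual convention for Newton's inequalities and is satisfied in the paper's application to $g(t)=t(t+1)\cdots(t+r+1)$, whose leading coefficient is $1$; for the literal statement allowing $a_n=0$ one should simply work with the actual degree $n'$ of $p$, and the inequality $a_i^2\ge a_{i-1}a_{i+1}\cdot\tfrac{i+1}{i}\cdot\tfrac{n'-i+1}{n'-i}\ge a_{i-1}a_{i+1}$ goes through unchanged for indices within the support, while it is trivial outside it.

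Similarly, your formula $p(x)=a_n x^m\prod_j(x+s_j)$ should read $a_{n'}$ with $n'=\deg p$ if $a_n=0$ is permitted; this does not affect the contiguity conclusion.
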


\begin{lem}\label{Posit-Delta-Negat}
    Let $f(t) = (t + r + 1)(t + r) \cdots (t + 1)t(t - 1) = \sum_{j=1}^{r+3} s_j t^j$.
    For any integer $r \geq 1$, there exists an integer $\delta \geq 2$ such that $s_j > 0$ for $j > \delta$ and $s_j < 0$ for $1 \leq j \leq \delta$.
    Additionally, $s_1 < 0$, $s_2 < 0$, and $\sum_{j=1}^{r+3} s_j = 0$.
\end{lem}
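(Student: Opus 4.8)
The plan is to analyze the polynomial $f(t)=(t+r+1)(t+r)\cdots(t+1)\,t\,(t-1)$ directly through its roots and through an auxiliary polynomial whose coefficients are easy to sign. First, note that $f(t) = t(t-1)\cdot g(t)$ where $g(t)=(t+1)(t+2)\cdots(t+r+1)$ is a polynomial of degree $r+1$ with \emph{strictly positive} coefficients; write $g(t)=\sum_{i=0}^{r+1}c_i t^i$ with all $c_i>0$. Then $f(t)=\sum_{i=0}^{r+1}c_i\,(t^{i+2}-t^{i+1})$, so collecting powers of $t$ gives $s_j = c_{j-2}-c_{j-1}$ for each $j$ (with the convention $c_i=0$ outside $0\le i\le r+1$). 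In particular $s_1=-c_0=-(r+1)!<0$ and $s_2=c_0-c_1<0$ because $g$ is real-rooted with all negative roots, hence by Lemma \ref{Realrooted-To-Unimod} its coefficient sequence $(c_i)$ is log-concave with no internal zeros and in particular unimodal; and one checks $c_1=g'(0)>g(0)=c_0$ directly since $c_1 = (r+1)!\sum_{k=1}^{r+1}\frac1k > (r+1)! = c_0$ for $r\ge 1$. Finally $\sum_{j=1}^{r+3}s_j = f(1) = 0$ since $t=1$ is visibly a root of $f$.

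The heart of the argument is the sign change. Since the $(c_i)$ form a positive log-concave (hence unimodal) sequence with no internal zeros, there is a peak index $p$ with $c_0\le c_1\le\cdots\le c_p\ge c_{p+1}\ge\cdots\ge c_{r+1}$. Then $s_j=c_{j-2}-c_{j-1}\le 0$ for $2\le j\le p+1$ and $s_j=c_{j-2}-c_{j-1}\ge 0$ for $p+2\le j\le r+2$, while $s_{r+3}=c_{r+1}>0$ and $s_1=-c_0<0$. To upgrade the weak inequalities to the strict statement claimed (with a single threshold $\delta$), I would invoke strict log-concavity / strict unimodality: because $g$ has $r+1$ distinct real roots, Newton's inequalities are strict, so $c_i^2>c_{i-1}c_{i+1}$, which forces the ratios $c_i/c_{i-1}$ to be strictly decreasing; hence the sequence $(c_i)$ is strictly increasing up to its peak and strictly decreasing after, so the differences $s_j=c_{j-2}-c_{j-1}$ are strictly negative for $j$ in the increasing range and strictly positive in the decreasing range. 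Set $\delta$ to be the largest index $j$ with $c_{j-1}\ge c_{j-2}$ (equivalently $s_j\le 0$); I must then rule out $s_\delta=0$, which cannot happen at the single transition point because strict unimodality makes the peak "sharp" — if $c_{j-2}=c_{j-1}$ this would be a plateau of length two at the top, contradicting strict Newton inequalities. This gives $s_j<0$ for $1\le j\le\delta$ and $s_j>0$ for $\delta<j\le r+3$. That $\delta\ge 2$ follows since $s_1<0$ and $s_2<0$ as shown above.

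The main obstacle is precisely the last point: converting unimodality of $(c_i)$ into the clean dichotomy for $(s_j)$ with \emph{no} zero value of $s_\delta$ and \emph{no} reappearance of a negative $s_j$ after $\delta$. The ratio-monotonicity argument ($c_i/c_{i-1}$ strictly decreasing, coming from strict Newton inequalities) is the cleanest route, and it is worth stating as a short sublemma: a positive sequence whose consecutive ratios are strictly decreasing is strictly increasing then strictly decreasing, with the "then" occurring exactly once and with no equality at the switch. An alternative, if one wants to avoid relying on strictness of Newton's inequalities, is to compute $g(t)$'s coefficients explicitly as $c_i = e_{r+1-i}(1,2,\ldots,r+1)$ (elementary symmetric functions) and check $e_k>e_{k-1}\cdot e_{k+1}/e_k$-type bounds by hand, or simply to verify that $c_i-c_{i-1}=e_{r+3-i}-e_{r+2-i}$ changes sign exactly once — but this is messier, so I would prefer the Newton-inequality route. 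Once the sign pattern of $(s_j)$ is pinned down, the remaining claims ($s_1<0$, $s_2<0$, $\sum s_j=0$) are immediate from the formulas above, completing the proof.
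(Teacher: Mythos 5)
Your overall approach is the same as the paper's: factor out $(t-1)$ (you additionally strip the factor $t$, which is only cosmetic), write the coefficients of $f$ as differences of consecutive coefficients of the remaining real-rooted factor, and deduce the sign pattern from unimodality of that coefficient sequence via Lemma \ref{Realrooted-To-Unimod}. Your formulas for $s_1$, $s_2$, and $\sum_j s_j = f(1) = 0$ are all correct.

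However, your argument for strictness at the transition has a genuine gap. You claim that strict Newton inequalities (which do hold here, since $g$ has distinct real roots) force the ratios $c_i/c_{i-1}$ to be strictly decreasing, and \emph{hence} the sequence $(c_i)$ is strictly increasing up to its peak and strictly decreasing after. That "hence" fails: ratios strictly decreasing permits exactly one ratio to equal $1$, which produces a plateau of length two at the peak. Your follow-up assertion that such a plateau "contradict[s] strict Newton inequalities" is simply false. A concrete counterexample: $(t+\tfrac12)(t+1)(t+2)=t^3+\tfrac72 t^2+\tfrac72 t + 1$ has three distinct real roots, hence satisfies the strict Newton inequalities, yet its coefficient sequence $(1,\tfrac72,\tfrac72,1)$ has a plateau of length two at the top. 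So distinct real-rootedness by itself cannot rule out $s_\delta = 0$, and this is precisely the possibility you need to exclude.

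The paper closes this gap not through any general real-rootedness argument but by invoking a specific arithmetic fact about the unsigned Stirling numbers of the first kind: Hammersley and Erd\H{o}s proved that $c(n,k)\neq c(n,k-1)$ for all $n,k$, which yields the strict unimodality $(r+1)!=c(r+2,1)<c(r+2,2)<\cdots>c(r+2,r+1)>c(r+2,r+2)=1$. You must cite this theorem or prove it independently; it is exactly what is needed, and your "messier alternative" of inspecting elementary symmetric functions of $1,\ldots,r+1$ is in substance that same result, so it cannot be bypassed. With that citation inserted in place of the strict-Newton step, the rest of your proof is correct and matches the paper's.
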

\begin{proof}
Since $t = 1$ is a root of $f(t)$, we have $\sum_{j=1}^{r+3} s_j = 0$.
Direct computation yields $s_1 = -(r+1)! < 0$ and
\[s_2 = (r+1)! \left( 1 - \sum_{i=1}^{r+1} \frac{1}{i} \right) < 0,\]
as the harmonic series exceeds $1$. Factorize $f(t) = (t - 1) g(t)$ with
\[
        g(t) = t(t + 1) \cdots (t + r + 1) = \sum_{i=1}^{r+2} c(r + 2, i) t^i,
    \]
where $c(r+2, i)$ are the \emph{unsigned Stirling numbers of the first kind} \cite[Chapter~1]{RP.Stanley}.

The coefficients $c(r+2, i)$ are positive, and $g(t)$ is real-rooted.
By Lemma~\ref{Realrooted-To-Unimod}, the sequence $\big(c(r+2, i)\big)_{i=1}^{r+2}$ is unimodal.
Furthermore, \cite{Hammersley,Erdos1953} established that $c(r+2, i) \neq c(r+2, i-1)$ for all $r$ and $i$,
implying strict unimodality:
\begin{align}\label{Strict-Unimodal}
        c(r+2, 1) < c(r+2, 2) < \cdots < c(r+2, \delta) > \cdots > c(r+2, r+2),
\end{align}
where $\delta$ is the mode.
Expanding $f(t)$ gives
\[
        f(t) = c(r+2, r+2) t^{r+3} + \sum_{i=2}^{r+2} \big( c(r+2, i-1) - c(r+2, i) \big) t^i - c(r+2, 1) t.
\]
From \eqref{Strict-Unimodal}, $c(r+2, i-1) - c(r+2, i) < 0$ for $i \leq \delta$ and $> 0$ for $i > \delta$.
The conclusion follows.
\end{proof}

\begin{exa}
For $f(t) = (t + r + 1) \cdots (t + 1) t (t - 1)$, let $\delta$ be defined as in Lemma~\ref{Posit-Delta-Negat}.
Computations yield:
\begin{align*}
        \delta = 2 \quad & \text{for} \quad 1 \leq r \leq 5; \\
        \delta = 3 \quad & \text{for} \quad 6 \leq r \leq 22; \\
        \delta = 4 \quad & \text{for} \quad 23 \leq r \leq 70; \\
        \delta = 5 \quad & \text{for} \quad 71 \leq r \leq 201.
\end{align*}
\end{exa}

\begin{thm}\label{Partial-Result-Negative}
Let $\mathcal{P} = \mathrm{Pry}^r(\Tm) \times \ell_n^n$ be a $(n+r+3)$-dimensional integral polytope, where $n,r \geq 1$. Then, for sufficiently large $m$, the coefficient of $t^j$ in $i(\mathcal{P},t)$ is negative for all $1 \leq j \leq n+1$.
\end{thm}
\begin{proof}
By Propositions \ref{prop_Cartesian} and \ref{Ehrhart-Pyr-Tm}, we have
$$
i(\mathcal{P},t) = \binom{t + r + 3}{r + 3} \cdot (nt+1)^n + (m-1) \binom{t + r + 1}{r + 3} \cdot (nt+1)^n.
$$
Clearly, $\binom{t + r + 3}{r + 3} \cdot (nt+1)^n$ is a polynomial in $t$ with positive coefficients and is independent of $m$.

Let
\begin{align*}
D(t) &=(m- 1) \binom{t + r + 1}{r + 3}\cdot (nt+1)^n= \sum_{j=0}^{n+r+3} D_j t^j,
\\ (nt+1)^n & = \sum_{k=0}^{n} B_k t^k,
\\ \binom{t + r + 1}{r + 3}&=\frac{(t+r+1)(t+r)\cdots (t+1)t(t-1)}{(r+3)!}=\frac{\sum_{j=1}^{r+3}s_jt^j}{(r+3)!}.
\end{align*}

Comparing coefficients yields
\begin{align*}
D_j=\frac{m-1}{(r+3)!} (s_1B_{j-1}+s_2B_{j-2}+\cdots+s_{r+3}B_{j-(r+3)}),
\end{align*}
where we extend the definition of $B_k$ by setting $B_k =0$ for $k>n$ or $k<0$.

By Lemma \ref{lem_B_j}, we have
$$
n^n = B_n > B_{n-1} > B_{n-2} > \cdots > B_1 > B_0 = 1.
$$
Let $\delta \geq 2$ be the integer given by Lemma \ref{Posit-Delta-Negat}. For $1 \leq j \leq \delta$, since $s_j < 0$ for all $1 \leq j \leq \delta$, it follows that
$$
s_1 B_{j-1} + s_2 B_{j-2} + \cdots + s_{r+3} B_{j-(r+3)} < 0.
$$
For $\delta+1 \leq j \leq n+1$, we estimate
\begin{align*}
s_1 B_{j-1} + s_2 B_{j-2} + \cdots + s_{r+3} B_{j-(r+3)} &\leq (s_1 + \cdots + s_\delta) B_{j-\delta} + (s_{\delta+1} + \cdots + s_{r+3}) B_{j-(\delta+1)} \\
&= -\rho B_{j-\delta} + \rho B_{j-(\delta+1)} = \rho \left( B_{j-(\delta+1)} - B_{j-\delta} \right) < 0,
\end{align*}
where $\rho$ is a positive real number. (Note that if $j - \delta = n$, then $j = n + \delta \geq n + 2$, contradicting $j \leq n + 1$.)

Thus, for sufficiently large $m$, $D_j$ is negative and has sufficiently large absolute value for all $1 \leq j \leq n+1$. Consequently, the coefficient of $t^j$ in $i(\mathcal{P},t)$ is negative for all $1 \leq j \leq n+1$. This completes the proof.
\end{proof}

\section{Main Results II}

In this section, we first prove that for any integer $d \geq 4$, there exists a $d$-dimensional integral polytope $\mathcal{P}$ such that the coefficients of the linear and quadratic terms in $i(\mathcal{P},t)$ are negative, while all other coefficients are positive; see Lemma \ref{lem_two_negative}.
Then, using this result, we establish the existence of a $d$-dimensional integral polytope $\mathcal{P}$ such that several low-degree coefficients in $i(\mathcal{P},t)$ are negative, while all remaining higher-degree coefficients are positive; see Theorems \ref{lem-more-positive} and \ref{Thm-More-positive-negi}.

\begin{dfn}
Let $n \in \mathbb{Z}_+$ be a positive integer. Define a family of polynomials in $t$, parameterized by $m \in \mathbb{Z}_+$, as follows:
\begin{align*}
\mathcal{F}^n(m) = \left\{ 1 + \sum_{i=1}^{n}(b_i + a_i m) t^i \Bigg| \begin{array}{c} a_i, b_i \in \mathbb{R}; \\ a_1, a_2 < 0; \\ a_3, \ldots, a_n > 0; \\ b_i \geq 0\ \text{for all}\ i \end{array} \right\}.
\end{align*}
\end{dfn}

\begin{lem}\label{lem_two_negative}
For any integer $d \geq 4$, there exists a $d$-dimensional integral polytope $\mathcal{P}$ such that $i(\mathcal{P}, t) \in \mathcal{F}^d(m)$.
\end{lem}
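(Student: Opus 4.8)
The plan is to produce, for every $d\ge 4$, a one-parameter family $\{\mathcal{P}(m)\}$ of $d$-dimensional integral polytopes whose Ehrhart polynomial is affine in $m$, say $i(\mathcal{P}(m),t)=A(t)+m\,B(t)$ with $A,B\in\mathbb{Q}[t]$ independent of $m$, such that $A$ has nonnegative coefficients while $B$ has negative coefficients in degrees $1$ and $2$ and positive coefficients in degrees $3,\dots,d$. Then for all sufficiently large $m$ the coefficient of $t^i$ in $i(\mathcal{P}(m),t)$ equals $b_i+a_i m$ with $b_i\ge 0$, $a_1,a_2<0$ and $a_3,\dots,a_d>0$, which is exactly the assertion $i(\mathcal{P}(m),t)\in\mathcal{F}^d(m)$.

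Such families are most naturally described through the $h^{*}$-polynomial. If $\mathcal{P}$ is a $d$-polytope with $h^{*}(z)=1+(m-c)\,g(z)$, where $g(z)=\sum_a g_a z^a$ has nonnegative coefficients and $g_0=g_1=0$, then expanding $\frac{z^a}{(1-z)^{d+1}}=\sum_{t\ge0}\binom{t+d-a}{d}z^t$ gives $B(t)=\sum_a g_a\binom{t+d-a}{d}$ and $A(t)=\binom{t+d}{d}-c\,B(t)$. For $2\le a\le d$ one has $\binom{t+d-a}{d}=\frac{1}{d!}\,\bigl[(t+1)(t+2)\cdots(t+d-a)\bigr]\cdot t(t-1)\cdots(t-a+1)$, so $B(t)$ is divisible by $t(t-1)$; writing $B(t)=t(t-1)\,C(t)$ with $C(t)=\sum_l c_l t^l$, the coefficient of $t^j$ in $B$ is $c_{j-2}-c_{j-1}$, which is negative for $j=1,2$ and positive for $j=3,\dots,d$ precisely when $0<c_0<c_1$ and $c_1>c_2>\dots>c_{d-2}>0$, i.e. when the coefficient sequence of $C$ is positive and strictly decreasing after its first step. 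So everything reduces to (i) realizing an $h^{*}$-polynomial of this shape by an honest integral polytope, and (ii) verifying this unimodality of $C$. For $4\le d\le 8$ one may simply take $\mathcal{P}(m)=\mathrm{Pyr}^{\,d-3}(\Tm)$, so that $g(z)=z^2$ (Proposition~\ref{Ehrhart-Pyr-Tm}); then $A(t)=\binom{t+d}{d}-\binom{t+d-2}{d}=\frac{(2t+d-1)(t+1)\cdots(t+d-2)}{(d-1)!}$ visibly has nonnegative coefficients, and (ii) is the case $\delta=2$ of Lemma~\ref{Posit-Delta-Negat}, valid exactly when $d-3\le 5$. For larger $d$ one instead takes $\mathcal{P}(m)$ to be an iterated pyramid over a ``stretched Reeve simplex'' $\mathrm{conv}\{0,e_1,\dots,e_{s-1},(w_1,\dots,w_{s-1},m)\}\subseteq\mathbb{R}^{s}$ with a suitably chosen weight vector $w$, whose $h^{*}$-polynomial has the form $1+\sum_{k=1}^{m-1}z^{h(k)}$ for an explicit height function $h(k)$ read off from $w$; choosing $w$ and $s$ makes $g$ supported on a suitable initial interval of exponents $\{2,3,\dots\}$, and $A$ again has nonnegative coefficients once $c$ is small and $m$ large.

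The technical heart is step (ii) for these larger supports: one must show that for every $d$ the coefficient sequence of $C_d=B_d/\bigl(t(t-1)\bigr)$ is positive and strictly decreasing after index $1$. The reason the choice $\mathcal{P}(m)=\mathrm{Pyr}^{\,d-3}(\Tm)$ fails for $d\ge 9$ — and, similarly, why Cartesian products with cubes or simplices fail — is that increasing $d$ moves the peak of $C_d$ past index $1$, so the negative band of $B_d$ creeps beyond degree $2$, whereas enlarging the support of $g$ too far makes a top coefficient of $B_d$ vanish; the delicate choice of $s$ and $w$ must make these two effects cancel uniformly in $d$. I would carry this out by taking the support to be an interval $\{2,3,\dots,s(d)\}$ with $s(d)$ growing slowly in $d$, writing the coefficients of $B_d$ as elementary symmetric functions (Stirling-type numbers) of the shifted integers appearing in the factorization above, and proving the needed coefficientwise inequality after clearing $t(t-1)$, using Newton's inequalities (Lemma~\ref{Realrooted-To-Unimod}) and the strict unimodality of the Stirling numbers of the first kind (as in the proof of Lemma~\ref{Posit-Delta-Negat}); the nonnegativity of $A$ and the realizability of the $h^{*}$-polynomial are then routine.
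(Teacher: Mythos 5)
Your proposal takes a genuinely different route from the paper, and it is complete only for $4\le d\le 8$. For those dimensions your argument via $\mathrm{Pyr}^{d-3}(\Tm)$ and the $\delta=2$ case of Lemma~\ref{Posit-Delta-Negat} is correct, as is your observation that $\binom{t+d}{d}-\binom{t+d-2}{d}=\frac{(2t+d-1)(t+1)\cdots(t+d-2)}{(d-1)!}$ has nonnegative coefficients. But for $d\ge 9$ the core difficulty — you name it yourself: the peak of $C_d$ drifting past index $1$ so that the negative band of $B_d$ escapes degree $2$ — is not actually resolved. You propose a program (stretched Reeve simplices, a slowly growing support $\{2,\dots,s(d)\}$, Stirling-number unimodality, Newton's inequalities) but you do not carry it out; the unimodality statement ``$C_d$ is positive and strictly decreasing after index~$1$'' for your intended family is precisely what would have to be proved, and it is not. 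Realizability of the prescribed $h^*$-polynomials by lattice simplices is also asserted as ``routine'' without being established. As it stands, this is a partial proof with a genuine gap from $d=9$ onward.

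The paper avoids this difficulty entirely with a two-line induction. It takes $d=4$ as the base case (exactly your $\mathrm{Pyr}^1(\Tm)$), and for the inductive step uses $\mathcal{Q}=r\mathcal{P}\times\ell_1$. Because $i(r\mathcal{P},t)=i(\mathcal{P},rt)$, the $j$-th coefficient of $i(\mathcal{Q},t)$ is $r^j(b_j+a_jm)+r^{j-1}(b_{j-1}+a_{j-1}m)$ (plus the shift from the constant term at $j=1$). For $r$ large the sign of $r^ja_j+r^{j-1}a_{j-1}$ is governed by $a_j$, so the pattern $a_1,a_2<0$, $a_3,\dots,a_d>0$ propagates to $a'_1,a'_2<0$, $a'_3,\dots,a'_{d+1}>0$, and $b'_i\ge 0$ is preserved since $b_i\ge 0$. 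Notice this is the same rescaling device you already know from Embedding Theorem~I; the point you are missing is that applying it here makes the top coefficient dominate and pins the negative band at degrees $1,2$ uniformly in $d$, which is exactly the ``cancellation of the two effects'' you were hoping to arrange by delicate choice of support. If you want to salvage your approach, either adopt this dilation-and-product step as the induction engine, or genuinely prove the unimodality and realizability claims for the stretched Reeve construction.
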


\begin{proof}
We proceed by induction on $d$. The base case $d=4$ holds since
$$i(\mathrm{Pry}^1(\Tm) ,t) = \frac{m}{24} t^4 + \frac{m+4}{12} t^3 + \frac{36-m}{24} t^2 + \frac{26-m}{12} t + 1.$$

Assume the induction hypothesis for $d=n$, i.e., there exists an $n$-dimensional integral polytope $\mathcal{P}$ such that
$$i(\mathcal{P},t) = 1 + \sum_{i=1}^{n}(b_i + a_i m) t^i \in \mathcal{F}^n(m).$$

Construct the polytope $\mathcal{Q} = r\mathcal{P} \times \ell_1$ with $r \in \mathbb{Z}_+$, where $\ell_1$ denotes the 1-dimensional unit simplex. Then $\mathcal{Q}$ is an $(n+1)$-dimensional integral polytope. Since $i(r\mathcal{P},t) = i(\mathcal{P},rt)$, we have
\begin{align*}
i(\mathcal{Q},t) &= i(r\mathcal{P},t) \cdot i(\ell_1,t) \\
&= \left(1 + \sum_{i=1}^{n} r^i (b_i + a_i m) t^i \right) \cdot (1 + t) \\
&= 1 + \left(1 + r(b_1 + a_1 m)\right)t + r^n (b_n + a_n m) t^{n+1} \\
&\quad + \sum_{i=2}^{n} \left[ (r^i b_i + r^{i-1} b_{i-1}) + (r^i a_i + r^{i-1} a_{i-1}) m \right] t^i.
\end{align*}
For sufficiently large $r \in \mathbb{Z}_+$, we obtain $i(\mathcal{Q},t) \in \mathcal{F}^{n+1}(m)$. This completes the induction step.
\end{proof}

\begin{thm}\label{Two-negative-Thm-Cpositiv}
For any integer $d \geq 4$, there exists a $d$-dimensional integral polytope $\mathcal{P}$ such that the coefficients of $t$ and $t^2$ in $i(\mathcal{P},t)$ are negative, while those of $t^3, t^4, \ldots, t^{d-2}$ are positive.
\end{thm}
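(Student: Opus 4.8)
The plan is to obtain the statement as an essentially immediate consequence of Lemma \ref{lem_two_negative}. By that lemma, for the given integer $d \geq 4$ there is a $d$-dimensional integral polytope whose Ehrhart polynomial lies in $\mathcal{F}^d(m)$; explicitly, writing it out, there are real numbers $a_i, b_i$ with $a_1, a_2 < 0$, $a_3, \ldots, a_d > 0$, and $b_i \geq 0$ for all $i$, and a polytope $\mathcal{P}_0$ (depending on the parameter $m \in \mathbb{Z}_+$) with
\[
i(\mathcal{P}_0, t) = 1 + \sum_{i=1}^{d} (b_i + a_i m)\, t^i .
\]
So the only work left is to turn the fixed signs of the $a_i$ into signs of the actual coefficients $b_i + a_i m$ by taking $m$ large.

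First I would fix a threshold for the parameter. Since there are only finitely many indices $i \in \{1, \ldots, d\}$ and each $a_i \neq 0$, the quantity
\[
m_0 := \max_{1 \le i \le d} \frac{b_i}{|a_i|}
\]
is a well-defined finite real number, and for every integer $m > m_0$ one has $\operatorname{sign}(b_i + a_i m) = \operatorname{sign}(a_i)$ for each $i$: the coefficients of $t$ and $t^2$ become negative, and the coefficients of $t^3, \ldots, t^d$ become positive. I would then fix one such integer $m$ and let $\mathcal{P}$ be $\mathcal{P}_0$ for this value of $m$; it is a $d$-dimensional integral polytope. Restricting the conclusion to the range $3 \le i \le d-2$ yields exactly the asserted sign pattern, and for completeness one notes that the coefficients of $t^{d-1}$ and $t^d$ are positive in any case, being (up to the standard normalization) half the boundary volume and the volume of $\mathcal{P}$.

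Since the whole argument reduces to this threshold computation, there is no serious obstacle here: all the substance is already contained in Lemma \ref{lem_two_negative} (and, through it, in the inductive construction $\mathcal{Q} = r\mathcal{P} \times \ell_1$ with $r$ large). The only point deserving a word of care is that the bound $m_0$ must be chosen uniformly over the finitely many coefficients simultaneously, which is immediate from taking the maximum above; after that, any sufficiently large integer $m$ works.
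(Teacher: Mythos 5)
Your proposal is correct and is essentially the paper's own proof: the paper also deduces the theorem directly from Lemma~\ref{lem_two_negative} by taking $m$ sufficiently large, and you have simply made the threshold explicit (taking $m > \max_i b_i/|a_i|$ so that the sign of each coefficient $b_i + a_i m$ agrees with that of $a_i$). No substantive difference in approach.
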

\begin{proof}
By Lemma \ref{lem_two_negative}, the result holds for sufficiently large positive integers $m$.
\end{proof}

\begin{thm}\label{lem-more-positive}
Given arbitrary integers $n \geq 1$ and $k \geq 4$, there exists an $(n+k)$-dimensional integral polytope $\mathcal{P}$ such that the coefficients of $t^{n+k-2}, t^{n+k-3}, \ldots, t^{n+3}$ in $i(\mathcal{P}, t)$ are positive, and the coefficients of $t^{n+2}, t^{n+1}, \ldots, t$ are negative.
\end{thm}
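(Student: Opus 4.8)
The plan is to build $\mathcal{P}$ as a Cartesian product $\mathcal{P}=\mathcal{Q}\times\mathrm{GHCube}(N,n)$, where $\mathcal{Q}$ is a $k$-dimensional integral polytope supplied by Lemma \ref{lem_two_negative} and $N$ is a large positive integer fixed later. Using the notation of that lemma, $i(\mathcal{Q},t)=1+\sum_{i=1}^{k}(b_i+a_i m)t^i$ with $a_1,a_2<0$, $a_3,\dots,a_k>0$, $b_i\ge 0$, and the constants $a_i,b_i$ not depending on $m$. By Proposition \ref{prop_Cartesian} and Example \ref{Linear-GHCube}, $\mathcal{P}$ is a $(n+k)$-dimensional integral polytope with
$$i(\mathcal{P},t)=\Big(1+\sum_{i=1}^{k}(b_i+a_i m)t^i\Big)(Nt+1)^n .$$
The guiding intuition is that, for $N$ large, the coefficients $\binom{n}{j}N^j$ of $(Nt+1)^n$ grow rapidly enough that convolving $i(\mathcal{Q},t)$ with them drags its two negative indices $1$ and $2$ up into one contiguous block $1,2,\dots,n+2$.

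First I would expand the coefficients. Writing $(Nt+1)^n=\sum_{j\ge 0}B_jt^j$ with $B_j=\binom{n}{j}N^j$ (so $B_0=1$ and $B_j=0$ for $j<0$ or $j>n$), the coefficient of $t^\ell$ in $i(\mathcal{P},t)$ is
$$C_\ell=\beta_\ell+m\,\alpha_\ell,$$
where $\beta_\ell=B_\ell+\sum_{i=1}^{k}b_iB_{\ell-i}$ is nonnegative (a sum of nonnegative terms) and $\alpha_\ell=\sum_{i=1}^{k}a_iB_{\ell-i}=\sum_{i=1}^{k}a_i\binom{n}{\ell-i}N^{\ell-i}$ is a polynomial in $N$ independent of $m$. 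The crux is a sign analysis of $\alpha_\ell$ viewed as a polynomial in $N$: the monomials occurring in $\alpha_\ell$ are indexed by the $i$ with $\max(1,\ell-n)\le i\le\min(k,\ell)$, and the top power of $N$ comes from the smallest such $i$. Hence the leading term of $\alpha_\ell$ is $a_1\binom{n}{\ell-1}N^{\ell-1}$ (which is negative) when $1\le\ell\le n+1$; it is $a_2\binom{n}{n}N^{n}=a_2N^n$ (negative) when $\ell=n+2$; and it is $a_{\ell-n}N^n$ (positive, since then $3\le\ell-n\le k$) when $n+3\le\ell\le n+k$. Consequently there is a threshold $N_0$ such that for every integer $N\ge N_0$ one has $\alpha_\ell<0$ for all $1\le\ell\le n+2$ and $\alpha_\ell>0$ for all $n+3\le\ell\le n+k$.

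Now fix such an $N$ and choose $m$. For $n+3\le\ell\le n+k$ we get $C_\ell=\beta_\ell+m\alpha_\ell>0$ for every positive integer $m$, since $\beta_\ell\ge 0$ and $\alpha_\ell>0$; in particular the coefficients of $t^{n+3},\dots,t^{n+k-2}$ are positive (as are those of $t^{n+k-1},t^{n+k}$). For $1\le\ell\le n+2$ we have $\alpha_\ell<0$, so $C_\ell<0$ whenever $m>\beta_\ell/|\alpha_\ell|$; taking $m$ to be a positive integer larger than the maximum of these finitely many bounds forces the coefficients of $t,t^2,\dots,t^{n+2}$ to be negative. This is exactly the sign pattern asserted in the theorem.

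The step I expect to demand the most care is the $N$-degree sign analysis of $\alpha_\ell$: one must confirm that for $\ell\le n+1$ the dominant contribution is the $i=1$ term (so $\alpha_\ell$ inherits the sign of $a_1$), whereas for $\ell\ge n+2$ it is the $i=\ell-n$ term (so $\alpha_\ell$ inherits the sign of $a_{\ell-n}$), and one must respect the order of the quantifiers: $\mathcal{Q}$, hence the constants $a_i,b_i$, is fixed first; then $N$ is chosen large in terms of the $a_i$ alone; and only afterwards is $m$ chosen large in terms of $N$ and the $b_i$. The rest is routine bookkeeping with binomial coefficients.
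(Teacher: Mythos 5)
Your proof is correct and takes essentially the same approach as the paper: both construct $\mathcal{P}=\mathcal{Q}\times\mathrm{GHCube}(N,n)$ with $\mathcal{Q}$ supplied by Lemma \ref{lem_two_negative}, split each coefficient into an $m$-independent nonnegative part plus $m$ times a part $\alpha_\ell$ depending only on the $a_i$ and $N$, fix the sign of $\alpha_\ell$ by taking $N$ large, and then take $m$ large. Your explicit identification of the leading power of $N$ in each $\alpha_\ell$ merely makes precise what the paper argues informally via Lemma \ref{lem_B_j} and the ``dominance'' remark.
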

\begin{proof}
By Lemma~\ref{lem_two_negative}, there exists a $k$-dimensional integral polytope $\mathcal{Q}$ such that
\[
i(\mathcal{Q},t) = 1 + \sum_{i=1}^{k} (b_i + a_i m) t^i \in \mathcal{F}^k(m).
\]
Construct the polytope $\mathcal{P} = \mathcal{Q} \times \ell_b^n$ with $b \geq n$. Then $\mathcal{P}$ is an $(n+k)$-dimensional integral polytope. Its Ehrhart polynomial is given by
\begin{align*}
i(\mathcal{P}, t) &= i(\mathcal{Q}, t) \cdot (b t + 1)^n \\
&= \left(1 + \sum_{i=1}^k (b_i + a_i m) t^i\right) \cdot (b t + 1)^n \\
&= \left(1 + \sum_{i=1}^k b_i t^i\right) (b t + 1)^n + \left(\sum_{i=1}^k a_i m t^i\right) (b t + 1)^n.
\end{align*}
Let $i(\mathcal{P}, t) = \sum_{j=0}^{n+k} C_j t^j$ and $(b t + 1)^n = \sum_{i=0}^{n} B_i t^i$, where $B_i = 0$ for $i > n$ or $i < 0$. Then $C_0 = 1$. For $1 \leq j \leq n+k$, comparing coefficients yields
\[
C_j = b_j + \sum_{\ell=1}^{k} b_\ell B_{j-\ell} + m \sum_{\ell=1}^{k} a_\ell B_{j-\ell},
\]
where $b_j = 0$ for $j > k$ (and similarly, terms with invalid indices are zero). It is clear that $C_{n+3}, C_{n+4}, \ldots, C_{n+k-2} > 0$. For sufficiently large $m$, we have $C_1 < 0$ and $C_2 < 0$.

Now consider $3 \leq j \leq n+2$. By Lemma~\ref{lem_B_j}, $\frac{B_{i+1}}{B_i} = \frac{n-i}{i+1} b > 1$ for $0 \leq i \leq n-1$ when $b$ is sufficiently large. Since $a_1, a_2 < 0$ and $a_3, \ldots, a_k > 0$, it follows that for all $j = 3, \ldots, n+2$,
\[
\sum_{\ell=1}^{k} a_\ell B_{j-\ell} < 0
\]
when $b$ is a sufficiently large positive integer; the negative terms $a_1 B_{j-1} + a_2 B_{j-2}$ dominate due to the growth of $B_i$ with $b$. For sufficiently large $b$, the term $b_j + \sum_{\ell=1}^{k} b_\ell B_{j-\ell}$ is a large positive constant independent of $m$. Therefore, for sufficiently large positive integers $m$, we have $C_j < 0$ for all $j = 3, \ldots, n+2$.
This completes the proof.
\end{proof}

\begin{thm}\label{Thm-More-positive-negi}
Given positive integers $\beta$ and $d$ with $d \geq \beta + 2$, there exists a $d$-dimensional integral polytope $\mathcal{P}$ such that in $i(\mathcal{P}, t)$,
the coefficients of $t^{d-2}, t^{d-3}, \ldots, t^{d-\beta-1}$ are positive,
while the coefficients of $t^{d-\beta-2}, t^{d-\beta-3}, \ldots, t^1$ are negative.
\end{thm}
\begin{proof}
Since Hibi et al.\ established the affirmative answer to Question~\ref{QuestionHHTY} for $3 \leq d \leq 6$ \cite[Proposition 4.2]{Hibi-Higashitani-Tsuchiya-Yoshida},
we restrict to $d \geq 7$ and $\beta \geq 1$.
Cases where $\beta = 1, 2$ and $d \geq 7$ follow from Theorem~\ref{One-Positive}.

For $\beta \geq 3$, Theorem~\ref{lem-more-positive} covers $d = \beta + i$ ($i \geq 5$).
The remaining cases are
\[
(\beta,\beta+2), \quad
(\beta,\beta+3), \quad
(\beta,\beta+4) \quad (\beta \geq 3).
\]
These are resolved as follows:
\begin{itemize}
    \item For $(\beta, \beta+2)$: The Ehrhart polynomial of $\mathcal{P} =[0,\beta+2]^{\beta+2}$ has all positive coefficients, satisfying the requirement.

    \item For $(\beta, \beta+3)$: Only the coefficient of $t^1$ is negative, as shown in \cite[Theorem 1.2]{Hibi-Higashitani-Tsuchiya-Yoshida}.

    \item For $(\beta, \beta+4)$: Only the coefficients of $t^2$ and $t^1$ are negative, per Theorem~\ref{Two-negative-Thm-Cpositiv}.
\end{itemize}
This completes the proof.
\end{proof}

\section{Embedding Theorem I and Its Applications}

For the subsequent theorem, define the sign function $\mathrm{sgn}(n)$ for an integer $n$ by $\mathrm{sgn}(n) = -1$ if $n < 0$, $\mathrm{sgn}(n) = 0$ if $n = 0$, and $\mathrm{sgn}(n) = +1$ if $n > 0$. The following embedding theorem preserves the sign pattern of a $d$-dimensional integral polytope when elevating it to dimension $d+1$.

\begin{thm}[Embedding Theorem I]\label{thm_Embedding Theorem}
Let $\mathcal{P}$ be a $d$-dimensional integral polytope with Ehrhart polynomial
$i(\mathcal{P},t) = 1 + \sum_{i=1}^{d} c_i t^i$, where $c_i \neq 0$ for all $i$.
Then there exist $(d+1)$-dimensional integral polytopes $\mathcal{Q}_1$ and $\mathcal{Q}_2$ with Ehrhart polynomials
\[
i(\mathcal{Q}_1, t) = 1 + \sum_{i=1}^{d+1} h_i t^i \quad \text{and} \quad i(\mathcal{Q}_2, t) = 1 + \sum_{i=1}^{d+1} g_i t^i,
\]
satisfying:
\begin{enumerate}
    \item $\mathrm{sgn}(h_i) = \mathrm{sgn}(c_i)$ for $1 \leq i \leq d-1$, and in particular $\mathrm{sgn}(h_{d-1}) = +1$;
    \item $\mathrm{sgn}(g_i) = \mathrm{sgn}(c_{i-1})$ for $2 \leq i \leq d-1$, and in particular $\mathrm{sgn}(g_1) = +1$.
\end{enumerate}
\end{thm}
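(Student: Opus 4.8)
The plan is to build $\mathcal{Q}_1$ and $\mathcal{Q}_2$ as Cartesian products (via Proposition~\ref{prop_Cartesian}) of $\mathcal{P}$, or a dilate $r\mathcal{P}$, with a well-chosen one-dimensional integral polytope, and then to read off the sign pattern of the resulting product Ehrhart polynomial by a dominance argument analogous to the one used in Lemma~\ref{lem_two_negative}. For $\mathcal{Q}_1$, I would take $\mathcal{Q}_1 = r\mathcal{P}\times \ell_1 = r\mathcal{P}\times[0,1]$ for a large integer $r$. Since $i(r\mathcal{P},t)=i(\mathcal{P},rt)=1+\sum_{i=1}^d c_i r^i t^i$, Proposition~\ref{prop_Cartesian} gives
\begin{align*}
i(\mathcal{Q}_1,t) = \left(1+\sum_{i=1}^{d} c_i r^i t^i\right)(1+t) = 1+(1+c_1 r)t + c_d r^d t^{d+1} + \sum_{i=2}^{d}\left(c_i r^i + c_{i-1} r^{i-1}\right)t^i.
\end{align*}
Thus $h_i = c_i r^i + c_{i-1} r^{i-1} = r^{i-1}(c_i r + c_{i-1})$ for $2\leq i\leq d$, and $h_1 = 1+c_1 r$. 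For $r$ large, $\mathrm{sgn}(h_i)=\mathrm{sgn}(c_i r + c_{i-1})=\mathrm{sgn}(c_i)$ for each $2\leq i\leq d$ because the $c_i r$ term dominates (using $c_i\neq 0$); in particular $\mathrm{sgn}(h_{d-1})=\mathrm{sgn}(c_{d-1})$, and $c_{d-1}>0$ since it is (half) a boundary volume. For $h_1$: if $c_1>0$ then $h_1>0=\mathrm{sgn}(c_1)$ trivially; if $c_1<0$ then $1+c_1 r<0$ for large $r$, so $\mathrm{sgn}(h_1)=-1=\mathrm{sgn}(c_1)$. This settles $\mathrm{sgn}(h_i)=\mathrm{sgn}(c_i)$ for all $1\leq i\leq d-1$.

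For $\mathcal{Q}_2$, the goal is to shift the sign pattern up by one index and force $\mathrm{sgn}(g_1)=+1$. Here I would multiply $i(\mathcal{P},t)$ not by $1+t$ but by a length-one polytope whose Ehrhart polynomial is of the form $bt+1$ with $b$ large, \emph{and} first dilate: take $\mathcal{Q}_2 = r\mathcal{P}\times \ell_b$ where $\ell_b=[0,b]$, so $i(\ell_b,t)=bt+1$. Then
\begin{align*}
i(\mathcal{Q}_2,t) = \left(1+\sum_{i=1}^{d} c_i r^i t^i\right)(bt+1) = 1+(b+c_1 r)t + c_d r^d b\, t^{d+1} + \sum_{i=2}^{d}\left(c_i r^i + b\, c_{i-1} r^{i-1}\right)t^i.
\end{align*}
Now $g_i = c_i r^i + b\, c_{i-1} r^{i-1} = r^{i-1}(c_i r + b\, c_{i-1})$ for $2\leq i\leq d$. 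I would first fix $b$ large enough and then choose $r$; but the key point is the reverse: to get $\mathrm{sgn}(g_i)=\mathrm{sgn}(c_{i-1})$ I need the $b\, c_{i-1} r^{i-1}$ term to dominate $c_i r^i$, i.e. I need $b$ large relative to $r$. This requires ordering the quantifiers correctly: fix $r$ first (any $r\geq 1$ works, say $r=1$, so really $\mathcal{Q}_2=\mathcal{P}\times\ell_b$), then choose $b$ large; then $g_i = c_i + b\,c_{i-1}$ has sign $\mathrm{sgn}(c_{i-1})$ for $2\leq i\leq d$ since $c_{i-1}\neq 0$. And $g_1 = b+c_1$, which is positive for $b$ large regardless of the sign of $c_1$, giving $\mathrm{sgn}(g_1)=+1$. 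Restricting to $2\leq i\leq d-1$ gives the claimed $\mathrm{sgn}(g_i)=\mathrm{sgn}(c_{i-1})$.

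The main obstacle — really the only subtlety — is bookkeeping the order of the quantifiers so that the dominance arguments for $\mathcal{Q}_1$ and $\mathcal{Q}_2$ each go through: for $\mathcal{Q}_1$ the dilation factor $r$ must dominate (we compare $c_i r$ against $c_{i-1}$), whereas for $\mathcal{Q}_2$ the edge length $b$ must dominate (we compare $b\, c_{i-1}$ against $c_i$), so the two constructions cannot share the same ``large parameter'' playing both roles, which is exactly why $\mathcal{Q}_1$ uses $r\mathcal{P}\times\ell_1$ and $\mathcal{Q}_2$ uses $\mathcal{P}\times\ell_b$. One should also note that dimensions are correct: $\dim(r\mathcal{P}\times\ell_1)=\dim(\mathcal{P}\times\ell_b)=d+1$ by Proposition~\ref{prop_Cartesian}, and both products are integral since $r\mathcal{P}$, $\ell_1$, $\ell_b$ are integral. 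No claim is made about $\mathrm{sgn}(h_d)$, $\mathrm{sgn}(h_{d+1})$, $\mathrm{sgn}(g_d)$, or $\mathrm{sgn}(g_{d+1})$, so those top coefficients — which are automatically positive anyway, being volumes and boundary volumes — need not be tracked.
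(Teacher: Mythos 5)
Your proposal is correct and coincides with the paper's proof: the paper likewise takes $\mathcal{Q}_1 = r\mathcal{P}\times\ell_1$ with $r$ large and $\mathcal{Q}_2 = \mathcal{P}\times\ell_m$ with $m$ large, computes the product Ehrhart polynomials via Proposition~\ref{prop_Cartesian}, and reads off the sign pattern by the same dominance argument. Your extra remark about the quantifier ordering (why $r$ must dominate in $\mathcal{Q}_1$ while the edge length must dominate in $\mathcal{Q}_2$, so the two parameters cannot be merged) is implicit in the paper and is a clean way to justify the two separate constructions.
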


\begin{proof}
Define the polytopes
\[
\mathcal{Q}_1 = r \mathcal{P} \times \ell_1 \quad \text{and} \quad \mathcal{Q}_2 = \mathcal{P} \times \ell_m,
\]
where $r$ and $m$ are integers chosen sufficiently large. Applying Proposition \ref{prop_Cartesian} yields
\begin{align*}
i(\mathcal{Q}_1, t) &= \left( 1 + \sum_{i=1}^{d} r^i c_i t^i \right) (1 + t)
= 1 + (r c_1 + 1)t + \sum_{i=2}^{d} (r^i c_i + r^{i-1} c_{i-1}) t^i + r^d c_d t^{d+1}, \\
i(\mathcal{Q}_2, t) &= \left( 1 + \sum_{i=1}^{d} c_i t^i \right) (1 + mt)
= 1 + (c_1 + m)t + \sum_{i=2}^{d} (c_i + m c_{i-1}) t^i + m c_d t^{d+1}.
\end{align*}
For sufficiently large $r$ and $m$, the dominant terms govern the signs:
\[
\mathrm{sgn}(r^i c_i + r^{i-1} c_{i-1}) = \mathrm{sgn}(c_i) \quad \text{and} \quad \mathrm{sgn}(c_i + m c_{i-1}) = \mathrm{sgn}(c_{i-1}).
\]
This establishes the claimed sign conditions.
\end{proof}

Hibi, Higashitani, Tsuchiya, and Yoshida \cite[Proposition 4.2]{Hibi-Higashitani-Tsuchiya-Yoshida} affirmatively resolved Question \ref{QuestionHHTY} for dimensions $3 \leq d \leq 6$. We now extend this result to $7$-dimensional integral polytopes.

\begin{prop}
Given integers $i_1,\ldots,i_q$ with $1\leq i_1<\cdots < i_q\leq 5$, there exists a $7$-dimensional integral polytope $\mathcal{P}$ such that the coefficients of $t^{i_1},\ldots,t^{i_q}$ in $i(\mathcal{P}, t)$ are negative, and all remaining coefficients are positive.
\end{prop}
\begin{proof}
Let $\mathcal{P}$ be a $7$-dimensional integral polytope with Ehrhart polynomial
$i(\mathcal{P},t)= 1 + \sum_{i=1}^{7} c_i t^i$, where each $c_i \neq 0$.
In \cite[Proposition 4.2]{Hibi-Higashitani-Tsuchiya-Yoshida}, Hibi et al. constructed all $6$-dimensional polytopes relevant to Question \ref{QuestionHHTY}.
Therefore, by Embedding Theorem I \ref{thm_Embedding Theorem}, it suffices to consider the following eight sign configurations for the Ehrhart coefficients of $7$-dimensional polytopes:
\[
\Big\{ \big( \sgn(c_5),\sgn(c_4),\sgn(c_3),\sgn(c_2),\sgn(c_1) \big) \in \{-1,+1\}^5 \mid \sgn(c_5) = \sgn(c_1) = -1 \Big\}.
\]
The case where $\sgn(c_5) = \sgn(c_4)=\sgn(c_3)=\sgn(c_2) = \sgn(c_1) = -1$ is covered by Theorem \ref{Hibi-HTY-Results}.

Recall the $s$-th dilation of an integral polytope $\mathcal{P}$, denoted $s\mathcal{P}=\{ s\alpha : \alpha \in \mathcal{P} \}$
is an integral polytope of dimension $\dim \mathcal{P}$. We construct polytopes for the remaining seven cases as follows:
\begin{align*}
\mathcal{P}_{+--} &= \mathcal{T}_{10} \times \mathcal{T}_{100} \times \ell_{10}, \\
\mathcal{P}_{-+-} &= 3(\mathcal{T}_{10} \times \mathcal{T}_{100}) \times \ell_4, \\
\mathcal{P}_{--+} &= 10\mathcal{T}_{100} \times \mathrm{Pry}^1(\mathcal{T}_{10000}), \\
\mathcal{P}_{++-} &= \mathcal{T}_{15} \times \mathcal{T}_{150} \times \ell_{18}, \\
\mathcal{P}_{-++} &= \mathcal{T}_{20} \times \mathcal{T}_{21} \times \ell_{1}, \\
\mathcal{P}_{+++} &= \mathrm{conv}\left\{ \mathbf{0}^7, \mathbf{e}_1^7, \mathbf{e}_2^7, \mathbf{e}_3^7, \mathbf{e}_4^7, \mathbf{e}_5^7, \mathbf{e}_6^7, (1,1,1,1000,1000,1000,1001) \right\}, \\
\mathcal{P}_{+-+} &= 99\Big( \mathrm{conv}\left\{ \mathbf{0}^5, \mathbf{e}_1^5, \mathbf{e}_2^5, \mathbf{e}_3^5, \mathbf{e}_4^5, (3,4,5,8,371) \right\} \Big) \times \mathrm{conv}\left\{ \mathbf{0}^2, \mathbf{e}_1^2, (1,1000000), (2,1000000) \right\},
\end{align*}
where $\mathcal{T}_{m}$ and $\ell_{k}$ denote the polytopes from Examples \ref{ReeveTetrahedron} and \ref{Linear-GHCube}, respectively; $\mathbf{e}_i^d$ is the $i$-th standard basis vector in $\mathbb{R}^d$; and $\mathbf{0}^d$ is the origin in $\mathbb{R}^d$.

The corresponding Ehrhart polynomials are:
\begin{align*}
i(\mathcal{P}_{+--},t) &= \tfrac{2500}{9}t^7 + \tfrac{1900}{9}t^6 - \tfrac{1445}{9}t^5 + \tfrac{199}{9}t^4 - \tfrac{224}{9}t^3 - \tfrac{1316}{9}t^2 - \tfrac{13}{3}t + 1, \\
i(\mathcal{P}_{-+-},t) &= 81000t^7 + 38070t^6 - 1341t^5 - 1017t^4 + 4t^3 - 198t^2 - 39t + 1, \\
i(\mathcal{P}_{--+},t) &= \tfrac{62500000}{9}t^7 + \tfrac{125425000}{9}t^6 - \tfrac{62074700}{9}t^5 - \tfrac{126145340}{9}t^4 \\
&\quad - \tfrac{42527}{9}t^3 + \tfrac{2188607}{18}t^2 - \tfrac{5867}{6}t + 1, \\
i(\mathcal{P}_{++-},t) &= 1125t^7 + \tfrac{1115}{2}t^6 - \tfrac{2429}{2}t^5 + 3t^4 + 247t^3 - \tfrac{819}{2}t^2 - \tfrac{11}{2}t + 1, \\
i(\mathcal{P}_{-++},t) &= \tfrac{35}{3}t^7 + \tfrac{37}{2}t^6 - \tfrac{11}{6}t^5 - \tfrac{14}{3}t^4 + 8t^3 + \tfrac{7}{6}t^2 - \tfrac{11}{6}t + 1, \\
i(\mathcal{P}_{+++},t) &= \tfrac{143}{720}t^7 + \tfrac{1}{180}t^6 - \tfrac{977}{360}t^5 + \tfrac{7}{18}t^4 + \tfrac{7967}{720}t^3 + \tfrac{469}{180}t^2 - \tfrac{91}{20}t + 1, \\
i( \mathcal{P}_{+-+},t) &= 29401442376075000t^7 + \tfrac{241325060195043}{20}t^6 - \tfrac{440146708947}{40}t^5 \\
&\quad + \tfrac{147110412735}{8}t^4 - \tfrac{39629403}{8}t^3 + \tfrac{40734679}{40}t^2 - \tfrac{59}{20}t + 1.
\end{align*}
This completes the proof.
\end{proof}

Finally, one might inquire whether zero can occur as an Ehrhart coefficient.
Indeed, this is possible, as demonstrated by the Reeve tetrahedron $\mathcal{T}_{12}$ in Example \ref{ReeveTetrahedron}, which is Ehrhart positive.
We further provide an example of a polytope that is not Ehrhart positive but has a vanishing middle coefficient.
Let $\mathcal{P} = \mathrm{Pry}^1(\mathcal{T}_{48}) \times \mathcal{T}_{20}$.
Its Ehrhart polynomial is:
\[
i(\mathcal{P},t) = \tfrac{20}{3}t^7 + \tfrac{148}{9}t^6 - \tfrac{187}{18}t^4 + \tfrac{13}{2}t^3 + \tfrac{53}{18}t^2 - \tfrac{19}{6}t + 1,
\]
where the coefficient of $t^5$ is zero.

\section{Embedding Theorems (II--V) and Theirs Applications}

\subsection{Embedding Theorems (II--V)}

Inspired by Embedding Theorem I, we establish the following general Embedding Theorem II.

\begin{thm}[Embedding Theorem II]\label{Embedding-Them-II}
Let $\mathcal{P}_1$ be a $d_1$-dimensional integral polytope with Ehrhart polynomial
$i(\mathcal{P}_1,t) = 1 + \sum_{i=1}^{d_1} c_i t^i$, where each coefficient $c_i$ is non-zero.
Let $\mathcal{P}_2$ be a $d_2$-dimensional integral polytope with Ehrhart polynomial
$i(\mathcal{P}_2,t) = 1 + \sum_{i=1}^{d_2} a_i t^i$, where each coefficient $a_i$ is non-zero.
If $d_1 \geq d_2 \geq 2$ are integers, then there exist two $(d_1+d_2)$-dimensional integral polytopes $\mathcal{Q}_1$ and $\mathcal{Q}_2$
with Ehrhart polynomials given by
\begin{align*}
i(\mathcal{Q}_1, t) = 1 + \sum_{i=1}^{d_1+d_2} h_i t^i \quad \text{and} \quad i(\mathcal{Q}_2, t) = 1 + \sum_{i=1}^{d_1+d_2} g_i t^i,
\end{align*}
such that the following conditions hold:
\begin{align*}
&\mathrm{sgn}(h_i) = \mathrm{sgn}(c_i) \quad \text{for all} \quad 1 \leq i \leq d_1; \\
&\mathrm{sgn}(h_i) = \mathrm{sgn}(a_{i-d_1}) \quad \text{for all} \quad d_1+1 \leq i \leq d_1+d_2-2; \\
&\text{in particular,} \quad \mathrm{sgn}(h_{d_1-1}) = \mathrm{sgn}(h_{d_1}) = +1;
\end{align*}
and
\begin{align*}
&\mathrm{sgn}(g_i) = \mathrm{sgn}(a_i) \quad \text{for all} \quad 1 \leq i \leq d_2; \\
&\mathrm{sgn}(g_i) = \mathrm{sgn}(c_{i-d_2}) \quad \text{for all} \quad d_2+1 \leq i \leq d_1+d_2-2; \\
&\text{in particular,} \quad \mathrm{sgn}(g_{d_2-1}) = \mathrm{sgn}(g_{d_2}) = +1.
\end{align*}
\end{thm}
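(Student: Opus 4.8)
The plan is to mimic the construction in Embedding Theorem I, but now taking Cartesian products with a full-dimensional polytope $\mathcal{P}_2$ (resp. $\mathcal{P}_1$) instead of with the segment $\ell_1$ or $\ell_m$, and then to rescale one factor by a large integer dilation to control which coefficients dominate. Specifically, I would set
\begin{align*}
\mathcal{Q}_1 = r\mathcal{P}_1 \times \mathcal{P}_2 \quad\text{and}\quad \mathcal{Q}_2 = r\mathcal{P}_2 \times \mathcal{P}_1,
\end{align*}
where $r$ is a sufficiently large positive integer and $r\mathcal{P}$ denotes the $r$-th dilation, which is again integral of the same dimension. By Proposition~\ref{prop_Cartesian} together with $i(r\mathcal{P},t)=i(\mathcal{P},rt)$, we get $\dim \mathcal{Q}_1 = d_1+d_2$ and
\begin{align*}
i(\mathcal{Q}_1,t) = \left(1+\sum_{i=1}^{d_1} r^i c_i t^i\right)\cdot\left(1+\sum_{j=1}^{d_2} a_j t^j\right) = 1 + \sum_{k=1}^{d_1+d_2} h_k t^k,
\end{align*}
so that $h_k = \sum_{i+j=k} r^i c_i a_j$ with the convention $c_0 = a_0 = 1$ and $c_i = a_j = 0$ outside the admissible ranges.

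The key observation is that, for each fixed $k$, the sum defining $h_k$ is a polynomial in $r$, and I would identify its leading term. The highest power of $r$ appearing is $r^{\min(k,d_1)}$: for $1\le k\le d_1$ the leading term is $r^k c_k a_0 = r^k c_k$ (achieved by $i=k$, $j=0$), hence $\mathrm{sgn}(h_k)=\mathrm{sgn}(c_k)$ for large $r$; for $d_1+1\le k \le d_1+d_2$ the highest admissible $i$ is $d_1$, giving leading term $r^{d_1} c_{d_1} a_{k-d_1}$, so $\mathrm{sgn}(h_k)=\mathrm{sgn}(c_{d_1} a_{k-d_1})$ for large $r$. Here I would invoke the standard fact (Ehrhart, \cite{Ehrhart62}, as recalled in the introduction) that the top coefficient $c_{d_1}$ of $i(\mathcal{P}_1,t)$ is the relative volume and hence positive, so $\mathrm{sgn}(c_{d_1})=+1$; this collapses the sign to $\mathrm{sgn}(a_{k-d_1})$ for $d_1+1\le k\le d_1+d_2-2$ (the restriction $k\le d_1+d_2-2$ simply records that we only claim to control the $a_j$ with $1\le j\le d_2-2$, though in fact $a_{d_2-1},a_{d_2}>0$ too since those are also leading Ehrhart coefficients). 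The assertion $\mathrm{sgn}(h_{d_1-1})=\mathrm{sgn}(h_{d_1})=+1$ then follows because $c_{d_1-1}$ and $c_{d_1}$ are the top two Ehrhart coefficients of $\mathcal{P}_1$ — again positive by \cite[Corollary 3.20; Theorem 5.6]{BeckRobins} — so that $\mathrm{sgn}(h_{d_1-1})=\mathrm{sgn}(c_{d_1-1})=+1$ and $\mathrm{sgn}(h_{d_1})=\mathrm{sgn}(c_{d_1})=+1$. The analysis for $\mathcal{Q}_2$ is completely symmetric, swapping the roles of $(\mathcal{P}_1,c,d_1)$ and $(\mathcal{P}_2,a,d_2)$; the hypothesis $d_1\ge d_2\ge 2$ is what guarantees both constructions land in dimension $d_1+d_2$ and that the index ranges in the statement are nonempty.

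The main technical obstacle is purely bookkeeping: verifying that for every $k$ the $r^{\max}$-coefficient of the polynomial $h_k(r)$ is exactly the single term claimed, i.e. that no cancellation or tie occurs at the top degree in $r$. This is immediate once one notes that for each value of $\min(k,d_1)$ the pair $(i,j)$ realizing it is unique ($i=\min(k,d_1)$, $j=k-i$), so the leading coefficient is a single nonzero product $r^i c_i a_j$ — nonzero precisely because all $c_i, a_j$ were assumed nonzero and $c_0=a_0=1$. After that, choosing $r$ large enough that the leading term dominates the (finitely many) lower-order terms in $r$, uniformly over the finitely many indices $k$, finishes the proof. I do not expect any genuine difficulty beyond this; the only subtlety worth flagging is the edge cases where $d_2=2$, which is exactly why the statement only claims control of $h_i$ up through $i=d_1+d_2-2=d_1$ and of $g_i$ up through $i=d_1$, matching the "$d_2+1\le i\le d_1+d_2-2$" range being empty when $d_1=d_2=2$.
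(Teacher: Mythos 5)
Your proposal is correct and takes essentially the same route as the paper: the paper constructs $\mathcal{Q}_1 = r\mathcal{P}_1 \times \mathcal{P}_2$ and $\mathcal{Q}_2 = \mathcal{P}_1 \times r\mathcal{P}_2$ (identical to yours, since Cartesian product commutes up to relabeling coordinates and hence preserves the Ehrhart polynomial), and likewise argues by identifying the dominant power of $r$ in each coefficient $h_k = \sum_{i+j=k} r^i c_i a_j$. Your bookkeeping (leading exponent $r^{\min(k,d_1)}$, uniqueness of the pair $(i,j)$ achieving it, positivity of the top two Ehrhart coefficients to collapse $\mathrm{sgn}(c_{d_1}a_{k-d_1})$ to $\mathrm{sgn}(a_{k-d_1})$, and the remark on the edge case $d_1=d_2=2$) matches the paper's argument precisely.
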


\begin{proof}
We construct the polytopes $\mathcal{Q}_1$ and $\mathcal{Q}_2$ as follows:
\begin{align*}
\mathcal{Q}_1 = r\mathcal{P}_1 \times \mathcal{P}_2 \quad \text{and} \quad \mathcal{Q}_2 = \mathcal{P}_1 \times r\mathcal{P}_2,
\end{align*}
where $r$ is a sufficiently large positive integer. By Proposition \ref{prop_Cartesian}, their Ehrhart polynomials satisfy
\begin{align*}
i(\mathcal{Q}_1, t) &= \left( 1 + \sum_{i=1}^{d_1} r^i c_i t^i \right) \cdot \left( 1 + \sum_{i=1}^{d_2} a_i t^i \right) \\
&= 1 + \sum_{i=1}^{d_2} \left( c_i r^i + c_{i-1} r^{i-1} a_1 + \cdots + a_i \right) t^i + \sum_{i=d_2+1}^{d_1} \left( c_i r^i + c_{i-1} r^{i-1} a_1 + \cdots + c_{i-d_2} r^{i-d_2} a_{d_2} \right) t^i \\
&\quad + \sum_{i=d_1+1}^{d_1+d_2} \left( c_{d_1} r^{d_1} a_{i-d_1} + c_{d_1-1} r^{d_1-1} a_{i-d_1+1} + \cdots + c_{i-d_2} r^{i-d_2} a_{d_2} \right) t^i.
\end{align*}
Since $c_{d_1} > 0$, $c_{d_1-1} > 0$, $a_{d_2} > 0$, and $a_{d_2-1} > 0$, for sufficiently large $r$, the sign conditions for $h_i$ are satisfied.

Similarly, for $\mathcal{Q}_2$, we have
\begin{align*}
i(\mathcal{Q}_2, t) &= \left( 1 + \sum_{i=1}^{d_1} c_i t^i \right) \cdot \left( 1 + \sum_{i=1}^{d_2} r^i a_i t^i \right) \\
&= 1 + \sum_{i=1}^{d_2} \left( c_i + c_{i-1} a_1 r + \cdots + a_i r^i \right) t^i + \sum_{i=d_2+1}^{d_1} \left( c_i + c_{i-1} a_1 r + \cdots + c_{i-d_2} a_{d_2} r^{d_2} \right) t^i \\
&\quad + \sum_{i=d_1+1}^{d_1+d_2} \left( c_{d_1} a_{i-d_1} r^{i-d_1} + c_{d_1-1} a_{i-d_1+1} r^{i-d_1+1} + \cdots + c_{i-d_2} a_{d_2} r^{d_2} \right) t^i.
\end{align*}
By the same reasoning (using $c_{d_1} > 0$, $c_{d_1-1} > 0$, $a_{d_2} > 0$, and $a_{d_2-1} > 0$), the sign conditions for $g_i$ hold for sufficiently large $r$. This completes the proof.
\end{proof}


\begin{cor}
Let $\mathcal{P}$ be a $d$-dimensional integral polytope with Ehrhart polynomial
$i(\mathcal{P},t)= 1+\sum_{i=1}^{d} c_i t^i$, where each $c_i \neq 0$.
Then there exist $(d+3)$-dimensional integral polytopes $\mathcal{Q}_1$ and $\mathcal{Q}_2$ with Ehrhart polynomials
\begin{align*}
i(\mathcal{Q}_1, t) = 1 + \sum_{i=1}^{d+3} h_i t^i \quad \text{and} \quad i(\mathcal{Q}_2, t) = 1 + \sum_{i=1}^{d+3} g_i t^i
\end{align*}
satisfying the sign conditions:
\begin{align*}
\mathrm{sgn}(h_i) &=\mathrm{sgn}(c_i)\quad\quad  \text{for all }\quad 1\leq i\leq d, \\
\mathrm{sgn}(h_{d+1})&=-1, \\
\mathrm{sgn}(h_{d-1})&=\mathrm{sgn}(h_d)=+1;
\end{align*}
and
\begin{align*}
\mathrm{sgn}(g_i) &=\mathrm{sgn}(c_{i-3})\quad\quad  \text{for all }\quad 4\leq i\leq d+1, \\
\mathrm{sgn}(g_1)&=-1, \\
\mathrm{sgn}(g_2)&=\mathrm{sgn}(g_3)=+1.
\end{align*}
\end{cor}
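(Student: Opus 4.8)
The plan is to deduce this corollary directly from Embedding Theorem II (Theorem~\ref{Embedding-Them-II}) by taking the second factor to be a Reeve tetrahedron. Concretely, I would set $\mathcal{P}_1 = \mathcal{P}$ and $\mathcal{P}_2 = \Tm$ with $m \geq 13$, so that $d_1 = d$ and $d_2 = 3$; the hypothesis $d_1 \geq d_2 \geq 2$ then holds since we assume $d \geq 3$ throughout.

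First I would record the sign data of $\mathcal{P}_2$. By Example~\ref{ReeveTetrahedron}, writing $i(\Tm,t) = 1 + \sum_{i=1}^{3} a_i t^i$ we have $a_3 = m/6$, $a_2 = 1$, and $a_1 = (12-m)/6$. For every $m \geq 13$ all three are nonzero (in particular $m \neq 12$), with $\mathrm{sgn}(a_3) = \mathrm{sgn}(a_2) = +1$ and $\mathrm{sgn}(a_1) = -1$. This is the only input about $\mathcal{P}_2$ that is needed.

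Next I would invoke Theorem~\ref{Embedding-Them-II}, which produces $(d+3)$-dimensional integral polytopes $\mathcal{Q}_1 = r\mathcal{P}\times \Tm$ and $\mathcal{Q}_2 = \mathcal{P}\times r\Tm$ for sufficiently large $r$, with Ehrhart polynomials $1 + \sum_{i=1}^{d+3} h_i t^i$ and $1 + \sum_{i=1}^{d+3} g_i t^i$. For $\mathcal{Q}_1$, the theorem gives $\mathrm{sgn}(h_i) = \mathrm{sgn}(c_i)$ for $1\leq i\leq d_1 = d$; it gives $\mathrm{sgn}(h_i) = \mathrm{sgn}(a_{i-d_1})$ for $d_1+1 \leq i \leq d_1 + d_2 - 2 = d+1$, which here is only the single index $i=d+1$, yielding $\mathrm{sgn}(h_{d+1}) = \mathrm{sgn}(a_1) = -1$; and the ``especially'' clause gives $\mathrm{sgn}(h_{d-1}) = \mathrm{sgn}(h_d) = +1$. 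This is precisely the asserted conclusion for $\mathcal{Q}_1$. For $\mathcal{Q}_2$, the theorem gives $\mathrm{sgn}(g_i) = \mathrm{sgn}(a_i)$ for $1\leq i\leq d_2 = 3$, hence $\mathrm{sgn}(g_1) = \mathrm{sgn}(a_1) = -1$ and $\mathrm{sgn}(g_2) = \mathrm{sgn}(g_3) = +1$; and it gives $\mathrm{sgn}(g_i) = \mathrm{sgn}(c_{i-d_2}) = \mathrm{sgn}(c_{i-3})$ for $d_2+1 \leq i \leq d_1+d_2-2$, i.e.\ for $4 \leq i \leq d+1$. Again this matches the claim verbatim.

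Since every ingredient is either Example~\ref{ReeveTetrahedron} or a direct specialization of Theorem~\ref{Embedding-Them-II}, there is no genuine obstacle; the only things to be careful about are the index bookkeeping in the two parts of Embedding Theorem II (in particular that with $d_2=3$ the ``middle'' range $d_1+1\le i\le d_1+d_2-2$ collapses to $i=d_1+1$), and the exclusion of $m=12$ so that the hypothesis ``each $a_i\neq 0$'' of Theorem~\ref{Embedding-Them-II} is met. I would also remark in passing that the top coefficients $h_{d+2}, h_{d+3}$ and $g_i$ for $i \geq d+2$ are not constrained by the statement — they are automatically positive, being at or near the top degree — so nothing further needs to be verified.
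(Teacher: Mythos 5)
Your proposal is correct and is exactly the paper's own argument: the paper proves this corollary by taking $\mathcal{P}_2 = \mathcal{T}_m$ with $m > 12$ in Theorem~\ref{Embedding-Them-II}, which is precisely your choice (with the equivalent condition $m \geq 13$). The additional bookkeeping you supply — the sign data $(\mathrm{sgn}(a_3), \mathrm{sgn}(a_2), \mathrm{sgn}(a_1)) = (+1,+1,-1)$, the collapse of the middle index range to $i = d+1$, and the need to avoid $m = 12$ so that all $a_i \neq 0$ — is all accurate and matches what the paper leaves implicit.
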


\begin{proof}
Take $\mathcal{P}_2$ as the Reeve tetrahedron $\mathcal{T}_m$ with $m>12$ in Theorem \ref{Embedding-Them-II}. The conclusion follows immediately.
\end{proof}

\begin{thm}[Embedding Theorem III]\label{Embedding-III}
Let $\mathcal{P}$ be a $d$-dimensional integral polytope with Ehrhart polynomial
$i(\mathcal{P},t)= 1+\sum_{i=1}^{d} c_i t^i$, where each $c_i \neq 0$.
Then there exists a $(d+2)$-dimensional integral polytope $\mathcal{Q}$ with Ehrhart polynomial
\begin{align*}
i(\mathcal{Q}, t) = 1 + \sum_{i=1}^{d+2} h_i t^i
\end{align*}
satisfying the sign conditions:
\begin{align*}
\mathrm{sgn}(h_i) &= \mathrm{sgn}(c_{i-2}) \quad\quad \text{for all }\quad 4\leq i\leq d, \\
\mathrm{sgn}(h_2) &= +1, \\
\mathrm{sgn}(h_1) &= \mathrm{sgn}(h_3) = \mathrm{sgn}(c_1).
\end{align*}
\end{thm}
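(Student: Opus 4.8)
The plan is to realize $\mathcal{Q}$ as a Cartesian product of a dilate of $\mathcal{P}$ with a carefully chosen planar polytope, and then to invoke Proposition \ref{prop_Cartesian} together with the principle already used for Embedding Theorem I: the sign of a sum of monomials in a large parameter is governed by its dominant term. Concretely, I would take
\[
\mathcal{Q}=s\mathcal{P}\times \mathcal{R}_N,\qquad
\mathcal{R}_N:=\mathrm{conv}\{(0,0),(1,0),(2,N),(1,N)\}\subseteq\mathbb{R}^2,
\]
where $s,N$ are positive integers to be chosen, $s$ being fixed first and $N$ afterwards. Here $\mathcal{R}_N$ is a parallelogram of area $N$ whose only boundary lattice points are its four vertices, since both edge vectors $(1,0)$ and $(1,N)$ are primitive; hence by the standard area/boundary description of Ehrhart polynomials of planar integral polytopes one has $i(\mathcal{R}_N,t)=Nt^2+2t+1$. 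Since $i(s\mathcal{P},t)=i(\mathcal{P},st)=1+\sum_{i=1}^{d}c_is^it^i$, Proposition \ref{prop_Cartesian} gives
\[
i(\mathcal{Q},t)=\Bigl(1+\sum_{i=1}^{d}c_is^it^i\Bigr)\bigl(Nt^2+2t+1\bigr),
\]
so $h_1=c_1s+2$ and, for $2\le j\le d$, $h_j=c_js^j+2c_{j-1}s^{j-1}+Nc_{j-2}s^{j-2}$, with the convention $c_0=1$.

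First I would fix $s$ large enough that $\mathrm{sgn}(c_1s+2)=\mathrm{sgn}(c_1)$: this is automatic when $c_1>0$, and requires only $s>2/|c_1|$ when $c_1<0$, which is legitimate because $c_1\neq0$. With $s$ now fixed, each $|c_j|s^2+2|c_{j-1}|s$ is a fixed constant, so I would then choose
\[
N>\max_{2\le j\le d}\frac{|c_j|s^2+2|c_{j-1}|s}{|c_{j-2}|},
\]
which makes sense since every $c_{j-2}$ appearing here, namely $c_0,c_1,\dots,c_{d-2}$, is nonzero. For such $N$ the term $Nc_{j-2}s^{j-2}$ strictly dominates the other two terms of $h_j$ in absolute value, so $\mathrm{sgn}(h_j)=\mathrm{sgn}(c_{j-2})$ for all $2\le j\le d$. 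Reading off the three ranges $j=2$ (giving $\mathrm{sgn}(h_2)=\mathrm{sgn}(c_0)=+1$), $j=3$ (giving $\mathrm{sgn}(h_3)=\mathrm{sgn}(c_1)$), and $4\le j\le d$ (giving $\mathrm{sgn}(h_i)=\mathrm{sgn}(c_{i-2})$), together with $\mathrm{sgn}(h_1)=\mathrm{sgn}(c_1)$ from the choice of $s$, is exactly the claimed sign pattern; the statement imposes no condition on $h_{d+1},h_{d+2}$, which are simply the top two (positive) coefficients.

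The one delicate point — and the reason a single product with a fixed planar polytope does not suffice — is the tension between the requirement on $h_1$ and that on the $h_j$'s. Forcing $Nc_{j-2}s^{j-2}$ to dominate $c_js^j$ requires the quadratic coefficient $N$ of the planar factor to be large compared with $s^2$, while keeping $h_1=c_1s+2$ of sign $c_1$ forbids the linear coefficient of the planar factor from exceeding $|c_1|s$. Since every planar integral polytope has linear Ehrhart coefficient at least $3/2$, one cannot resolve the $h_1$ constraint by shrinking that linear coefficient when $|c_1|$ is small; instead one must first \emph{inflate} $|c_1|$ via the dilation $s$, and then use a planar polytope whose area ($=N$) is unbounded while its number of boundary lattice points ($=4$) stays bounded. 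Choosing $s$ before $N$ decouples the two scales and makes the constraints compatible; once that ordering is fixed, I expect the verification of the dominance inequalities to be completely routine.
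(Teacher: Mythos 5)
Your proof is correct and follows essentially the same approach as the paper. Both take $\mathcal{Q}=r\mathcal{P}\times\mathcal{U}$ with $\mathcal{U}$ a parallelogram having only four boundary lattice points and freely tunable area (the paper uses $\mathrm{conv}\{(0,0),(1,0),(1,a),(2,a)\}$, you use $\mathrm{conv}\{(0,0),(1,0),(2,N),(1,N)\}$), and both fix the dilation factor first and then make the area large; your explicit bound on $N$ merely makes the paper's informal ``$a\gg r\gg 0$'' precise.
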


\begin{proof}
Consider an integral convex polygon $\mathcal{U}$ with area $a$ and $b$ boundary lattice points. By \cite[Theorem 2.9]{BeckRobins}, its Ehrhart polynomial is
\begin{align*}
i(\mathcal{U},t) = a t^2 + \frac{b}{2} t + 1.
\end{align*}
We may choose sufficiently large $a$ while keeping $b$ relatively small; for example, take $\mathcal{U} = \conv\{(0,0), (1,0), (1,a), (2,a)\}$, which yields $b=4$. See Pick's Theorem \cite[Theorem 2.8]{BeckRobins}.

Define the polytope
\begin{align*}
\mathcal{Q} = r\mathcal{P} \times \mathcal{U}
\end{align*}
where $r$ is a sufficiently large integer. Applying Proposition \ref{prop_Cartesian} gives
\begin{align*}
i(\mathcal{Q}, t) &= 1 + \left(c_1r + \frac{b}{2}\right)t + \left(c_2r^2 + \frac{c_1 r b}{2} + a\right)t^2 + c_d r^d a  t^{d+2} \\
&\quad + \left( \frac{c_d r^d b}{2} + c_{d-1} r^{d-1} a \right) t^{d+1} \\
&\quad + \sum_{i=3}^d \left( c_i r^i + \frac{c_{i-1} r^{i-1} b}{2} + c_{i-2} r^{i-2} a \right) t^i.
\end{align*}
For integers $a \gg r \gg 0$, the sign patterns satisfy:
\begin{align*}
\mathrm{sgn}\left( c_i r^i + \frac{c_{i-1} r^{i-1} b}{2} + c_{i-2} r^{i-2} a \right) &= \mathrm{sgn}(c_{i-2}) \quad\quad (3 \leq i \leq d), \\
\mathrm{sgn}\left( c_1r + \frac{b}{2} \right) &= \mathrm{sgn}(c_1), \\
\mathrm{sgn}\left( c_2r^2 + \frac{c_1 r b}{2} + a \right) &= \mathrm{sgn}(a) = +1.
\end{align*}
This establishes the required sign conditions.
\end{proof}

\begin{thm}[Embedding Theorem IV]\label{Embedding-IV}
Let $\mathcal{P}$ be a $d$-dimensional integral polytope with Ehrhart polynomial $i(\mathcal{P},t) = 1 + \sum_{i=1}^{d} c_i t^i$, where each $c_i \neq 0$. Then there exists a $(d+3)$-dimensional integral polytope $\mathcal{Q}$ with Ehrhart polynomial
\[
i(\mathcal{Q}, t) = 1 + \sum_{i=1}^{d+3} h_i t^i,
\]
such that the following sign conditions hold:
\[
\mathrm{sgn}(h_i) = -\mathrm{sgn}(c_{i-1}) \quad \text{for all} \quad 2 \leq i \leq d; \quad \mathrm{sgn}(h_d) = \mathrm{sgn}(h_{d+1}) = -1; \quad \text{and} \quad \mathrm{sgn}(h_1) = -1.
\]
\end{thm}

\begin{proof}
We construct the polytope $\mathcal{Q}$ as $\mathcal{Q} = r\mathcal{P} \times \mathcal{T}_m$, where $r$ and $m$ are sufficiently large integers to be chosen later. By Proposition~\ref{prop_Cartesian}, the Ehrhart polynomial of $\mathcal{Q}$ is given by

\begin{footnotesize}
\begin{align*}
i(\mathcal{Q}, t) &=1+\left(c_1r+\frac{12-m}{6}\right)t+\left(c_2r^2+\frac{c_1r(12-m)}{6}+1\right)t^2+ \left(c_3r^3+\frac{c_2r^2(12-m)}{6}+c_1r+\frac{m}{6}\right) t^{3}
\\&\quad +\sum_{i=4}^d\left(c_ir^i+\frac{c_{i-1}r^{i-1}(12-m)}{6}+c_{i-2}r^{i-2}+\frac{c_{i-3}r^{i-3}m}{6}\right)t^i
\\&\quad +\left( \frac{c_dr^d(12-m)}{6}+c_{d-1}r^{d-1}+\frac{c_{d-2}r^{d-2}m}{6}\right) t^{d+1} +\left( c_dr^d+\frac{c_{d-1}r^{d-1}m}{6}\right)t^{d+2} +\frac{c_dr^dm}{6}t^{d+3}.
\end{align*}
\end{footnotesize}

Now, select $r$ and $m$ such that $m \gg r \gg 0$ (i.e., $r$ is a sufficiently large positive integer and $m$ is sufficiently large relative to $r$). Under this choice, the dominant terms govern the signs of the coefficients as follows:
\begin{itemize}
    \item For $h_1$: $\mathrm{sgn}\left(c_1 r + \frac{12 - m}{6}\right) = -1$, since the term $-\frac{m}{6}$ dominates for large $m$.
    \item For $h_2$: $\mathrm{sgn}\left(c_2 r^2 + \frac{c_1 r (12 - m)}{6} + 1\right) = \mathrm{sgn}\left( \frac{c_1 r (12 - m)}{6} \right) = -\mathrm{sgn}(c_1)$.
    \item For $h_3$: $\mathrm{sgn}\left(c_3 r^3 + \frac{c_2 r^2 (12 - m)}{6} + c_1 r + \frac{m}{6}\right) = \mathrm{sgn}\left( \frac{c_2 r^2 (12 - m)}{6} \right) = -\mathrm{sgn}(c_2)$.
    \item For $4 \leq i \leq d$: $\mathrm{sgn}\left(c_i r^i + \frac{c_{i-1} r^{i-1} (12 - m)}{6} + c_{i-2} r^{i-2} + \frac{c_{i-3} r^{i-3} m}{6}\right) = \mathrm{sgn}\left(\frac{c_{i-1} r^{i-1} (12 - m)}{6}\right) = -\mathrm{sgn}(c_{i-1})$.
    \item For $h_{d+1}$: $\mathrm{sgn}\left( \frac{c_d r^d (12 - m)}{6} + c_{d-1} r^{d-1} + \frac{c_{d-2} r^{d-2} m}{6} \right) = \mathrm{sgn}\left(\frac{c_d r^d (12 - m)}{6}\right) = -1$.
\end{itemize}
This establishes the required sign conditions: $\mathrm{sgn}(h_i) = -\mathrm{sgn}(c_{i-1})$ for $2 \leq i \leq d$, $\mathrm{sgn}(h_d) = \mathrm{sgn}(h_{d+1}) = -1$, and $\mathrm{sgn}(h_1) = -1$. The coefficients $h_{d+2}$ and $h_{d+3}$ are not constrained by the theorem. Therefore, the polytope $\mathcal{Q}$ satisfies the claim.
\end{proof}

\begin{thm}[Embedding Theorem V]\label{Embedding-V}
Let $\mathcal{P}$ be a $d$-dimensional integral polytope with Ehrhart polynomial
$i(\mathcal{P},t)= 1 + \sum_{i=1}^{d} c_i t^i$, where each coefficient $c_i$ is nonzero.
Then there exists a $(d+3)$-dimensional integral polytope $\mathcal{Q}$ with Ehrhart polynomial
\[
i(\mathcal{Q}, t) = 1 + \sum_{i=1}^{d+3} h_i t^i
\]
satisfying the following sign conditions:
\begin{align*}
&\sgn(h_i) = \sgn(c_{i-3}) \quad\quad \text{for all }\quad 4 \leq i \leq d+1, \\
&\sgn(h_3) = +1, \\
&\sgn(h_2) = -\sgn(c_1), \\
&\sgn(h_1) = -1.
\end{align*}
\end{thm}

\begin{proof}
Consider the polytope
\[
\mathcal{Q} = \mathcal{P} \times r\mathcal{T}_m
\]
where $r$ and $m$ are sufficiently large positive integers. By Proposition \ref{prop_Cartesian}, the Ehrhart polynomial is given by:

\begin{footnotesize}
\begin{align*}
i(\mathcal{Q}, t) &=1+\left(c_1+\frac{(12-m)r}{6}\right)t+\left(c_2+\frac{c_1r(12-m)}{6}+r^2\right)t^2+ \left(c_3+\frac{c_2r(12-m)}{6}+c_1r^2+\frac{mr^3}{6}\right) t^{3}
\\&\quad +\sum_{i=4}^d\left(c_i+\frac{c_{i-1}r(12-m)}{6}+c_{i-2}r^2+\frac{c_{i-3}r^3m}{6}\right)t^i
\\&\quad +\left( \frac{c_dr(12-m)}{6}+c_{d-1}r^{2}+\frac{c_{d-2}r^3m}{6}\right) t^{d+1} +\left( c_dr^2+\frac{c_{d-1}r^{3}m}{6}\right)t^{d+2} +\frac{c_dr^3m}{6}t^{d+3}.
\end{align*}
\end{footnotesize}
Taking $r$ sufficiently large and $m$ sufficiently large relative to $r$ (i.e., $m \gg r \gg 0$), we obtain the dominant terms:
\begin{align*}
&\sgn\left(c_i + \frac{c_{i-1} r (12-m)}{6} + c_{i-2} r^2 + \frac{c_{i-3} r^3 m}{6}\right)
= \sgn\left(\frac{c_{i-3} r^3 m}{6}\right)
&& \text{for } 4 \leq i \leq d, \\
&\sgn\left( \frac{c_d r (12-m)}{6} + c_{d-1} r^{2} + \frac{c_{d-2} r^3 m}{6} \right)
= \sgn\left(\frac{c_{d-2} r^3 m}{6}\right), \\
&\sgn\left(c_3 + \frac{c_2 r (12-m)}{6} + c_1 r^2 + \frac{m r^3}{6}\right)
= \sgn\left(\frac{m r^3}{6}\right) = +1, \\
&\sgn\left(c_2 + \frac{c_1 r (12-m)}{6} + r^2\right)
= \sgn\left( \frac{c_1 r (12-m)}{6} \right) = -\sgn(c_1), \\
&\sgn\left(c_1 + \frac{(12-m)r}{6}\right) = -1.
\end{align*}
This establishes the required sign conditions.
\end{proof}

\subsection{Applications}
Let $d \geq 3$ be a positive integer. For Question~\ref{QuestionHHTY}, Hibi, Higashitani, Tsuchiya, and Yoshida established $d-1$ cases among the $2^{d-2}$ possible sign patterns; see Theorem~\ref{Hibi-HTY-Results}. Consequently, the remaining $2^{d-2} - (d-1)$ cases require verification. The following theorem leverages our Embedding Theorem to demonstrate that only a limited number of cases need be examined.

The \emph{Fibonacci sequence} $\{F_n\}_{n \geq 1}$ is defined by $F_1 = 1$, $F_2 = 1$, and $F_n = F_{n-1} + F_{n-2}$ for $n \geq 3$. We refer to this relation as the \emph{Fibonacci recurrence}.

\begin{thm}\label{Fibonacci-Bound}
Let $d \geq 5$. Suppose Question~\ref{QuestionHHTY} has been verified for polytopes of dimensions $1$ through $d$. Then for $(d+1)$-dimensional polytopes, under the constraints of Embedding Theorems~I--IV, it suffices to verify $F_{d+1}$ cases, where $F_n$ satisfies the Fibonacci recurrence $F_n = F_{n-1} + F_{n-2}$ with initial conditions $F_6 = 1$ and $F_7 = 1$.
\end{thm}
\begin{proof}
Let $\mathcal{P}$ be a $(d+1)$-dimensional integral polytope with Ehrhart polynomial $i(\mathcal{P},t) = 1 + \sum_{i=1}^{d+1} c_i t^i$. Embedding Theorem~I~\ref{thm_Embedding Theorem} restricts consideration to sign patterns of the form
\begin{align*}
\Big\{ (\mathrm{sgn}(c_{d-1}), \ldots, \mathrm{sgn}(c_1))\in\{-1,+1\}^{d-1} \mid \mathrm{sgn}(c_{d-1}) = \mathrm{sgn}(c_1) = -1 \Big\}.
\end{align*}
Embedding Theorem~IV~\ref{Embedding-IV} further restricts to patterns satisfying
\begin{align*}
\Big\{ (\mathrm{sgn}(c_{d-1}), \ldots, \mathrm{sgn}(c_1))\in\{-1,+1\}^{d-1} \mid  \mathrm{sgn}(c_{d-1}) = \mathrm{sgn}(c_1) = -1,\  \mathrm{sgn}(c_{d-2}) = +1 \Big\}.
\end{align*}
Embedding Theorem~III~\ref{Embedding-III} addresses the subcase where $\mathrm{sgn}(c_1, c_2, c_3) = (-1, +1, -1)$, while Embedding Theorem~II~\ref{Embedding-Them-II} eliminates patterns containing consecutive $+1$ entries.

The number of unresolved cases are thus:
\begin{align*}
F_{d+1} = \# \Big\{ (\mathrm{sgn}(c_{d-1}),&\ldots,\mathrm{sgn}(c_1))\in\{-1,+1\}^{d-1} \mid \mathrm{sgn}(c_{d-1}) = \mathrm{sgn}(c_1) = \mathrm{sgn}(c_2) = -1, \\
& \mathrm{sgn}(c_{d-2}) = +1,\ \text{with no consecutive $+1$ entries} \Big\}.
\end{align*}
When $\mathrm{sgn}(c_3) = -1$, we require $F_d$ verifications. If $\mathrm{sgn}(c_3) = +1$, then $\mathrm{sgn}(c_4) = -1$ (to avoid consecutive $+1$), requiring $F_{d-1}$ verifications. This yields the recurrence $F_{d+1} = F_d + F_{d-1}$. Initial conditions $F_6 = 1$ and $F_7 = 1$ complete the proof.
\end{proof}

\begin{prop}
Given integers $i_1,\ldots,i_q$ with $1\leq i_1<\cdots < i_q\leq 6$, there exists an $8$-dimensional integral polytope $\mathcal{P}$ such that the coefficients of $t^{i_1},\ldots,t^{i_q}$ in $i(\mathcal{P}, t)$ are negative, and all remaining coefficients are positive.
\end{prop}
\begin{proof}
Let $\mathcal{P}$ be an $8$-dimensional integral polytope with Ehrhart polynomial
$i(\mathcal{P},t) = 1 + \sum_{i=1}^{8} c_i t^i$, where each $c_i \neq 0$.
By Theorem \ref{Fibonacci-Bound}, it suffices to verify the following sign patterns:
\begin{align*}
&\big(\mathrm{sgn}(c_6),\mathrm{sgn}(c_5),\mathrm{sgn}(c_4),\mathrm{sgn}(c_3),\mathrm{sgn}(c_2),\mathrm{sgn}(c_1)\big) = (-1,+1,-1,+1,-1,-1),\\
&\big(\mathrm{sgn}(c_6),\mathrm{sgn}(c_5),\mathrm{sgn}(c_4),\mathrm{sgn}(c_3),\mathrm{sgn}(c_2),\mathrm{sgn}(c_1)\big) = (-1,+1,-1,-1,-1,-1).
\end{align*}

\noindent
\textit{Case 1:} For the first sign pattern, define
\[\mathcal{P} = \mathcal{Q}_1 \times \mathcal{Q}_2,\]
where
\begin{align*}
\mathcal{Q}_1 &= \mathrm{conv}\left\{(0,0,0),(1,0,0),(0,1,0),(2,2,1000)\right\}, \\
\mathcal{Q}_2 &= \mathrm{conv}\left\{\mathbf{0}^5, \mathbf{e}_1^5, \mathbf{e}_2^5, (0,0,1,0,1), (0,0,0,1,1), (3,4,5,8,754)\right\}.
\end{align*}
The Ehrhart polynomials are
\begin{align*}
i(\mathcal{Q}_1,t) &= \frac{500}{3}t^3 + \frac{3}{2}t^2 + \frac{11}{6}t + 1, \\
i(\mathcal{Q}_2,t) &= \frac{247}{40}t^5 + \frac{13}{24}t^4 - \frac{19}{24}t^3 + \frac{35}{24}t^2 - \frac{143}{60}t + 1.
\end{align*}
Thus, the Ehrhart polynomial of $\mathcal{P}$ is
\[
i(\mathcal{P},t) = \frac{6175}{6} t^{8} + \frac{71669}{720} t^{7} - \frac{10783}{90} t^{6} + \frac{89653}{360} t^{5} - \frac{7127}{18} t^{4} + \frac{118781}{720} t^{3} - \frac{127}{90} t^{2} - \frac{11}{20} t + 1,
\]
confirming the first sign pattern.

\noindent
\textit{Case 2:} For the second sign pattern, define
\[
\mathcal{P} = 2 \left( \mathcal{T}_{10} \times \mathcal{T}_{100} \right) \times \mathrm{conv}\left\{(0,0),(1,0),(1,20),(2,20)\right\},
\]
where $\mathcal{T}_{m}$ is the polytope from Example \ref{ReeveTetrahedron}. The Ehrhart polynomial is
\[
i(\mathcal{P},t) = \frac{320000}{9}t^8 + \frac{137600}{9}t^7 - \frac{8320}{3}t^6 + \frac{5888}{9}t^5 - \frac{1360}{3}t^4 - \frac{5080}{9}t^3 - \frac{440}{9}t^2 - \frac{80}{3}t + 1,
\]
confirming the second sign pattern.
\end{proof}

\begin{prop}
Given integers $i_1,\ldots,i_q$ satisfying $1\leq i_1<\cdots < i_q\leq 7$, there exists a $9$-dimensional integral polytope $\mathcal{P}$ such that in the Ehrhart polynomial $i(\mathcal{P}, t)$, the coefficients of $t^{i_1},\ldots,t^{i_q}$ are negative while all remaining coefficients are positive.
\end{prop}
\begin{proof}
Let $\mathcal{P}$ be a $9$-dimensional integral polytope with Ehrhart polynomial
$i(\mathcal{P},t)= 1+\sum_{i=1}^{9} c_i t^i$, where each $c_i \neq 0$.
By Theorem \ref{Fibonacci-Bound}, it suffices to verify that the following three sign patterns hold:
\begin{align*}
&(\mathrm{sgn}(c_7),\mathrm{sgn}(c_6),\ldots,\mathrm{sgn}(c_2),\mathrm{sgn}(c_1))=(-1,+1,-1,-1,-1,-1,-1),
\\& (\mathrm{sgn}(c_7),\mathrm{sgn}(c_6),\ldots,\mathrm{sgn}(c_2),\mathrm{sgn}(c_1))=(-1,+1,-1,+1,-1,-1,-1),
\\& (\mathrm{sgn}(c_7),\mathrm{sgn}(c_6),\ldots,\mathrm{sgn}(c_2),\mathrm{sgn}(c_1))=(-1,+1,-1,-1,+1,-1,-1).
\end{align*}

\noindent\textit{Case 1.} Consider the polytope
\[
\mathcal{P} = \mathcal{T}_{10} \times \mathcal{T}_{100} \times \ell_{10} \times \operatorname{conv}\{(0,0),(1,0),(1,10),(2,10)\}.
\]
Its Ehrhart polynomial is given by
\begin{small}
\[
i(\mathcal{P},t) = \frac{25000}{9}t^9 + \frac{8000}{3}t^8 - \frac{8150}{9}t^7 + \frac{1000}{9}t^6 - \frac{3287}{9}t^5 - \frac{13409}{9}t^4 - \frac{1082}{3}t^3 - \frac{1304}{9}t^2 - \frac{7}{3}t + 1,
\]
\end{small}
which exhibits the first sign pattern.

\noindent\textit{Case 2.} Define $\mathcal{Q}_1$ and $\mathcal{Q}_2$ as
\begin{align*}
\mathcal{Q}_1 & = \operatorname{conv}\left\{\mathbf{0}, \mathbf{e}_1, \mathbf{e}_2, (0,0,1,0,1), (0,0,0,1,1), (3,4,5,8,754)\right\}, \\
\mathcal{Q}_2 & = \operatorname{conv}\left\{(0,0,0,0),(1,0,0,0),(0,1,0,0),(0,0,1,0),(2,2,310,610)\right\},
\end{align*}
where $\mathbf{0}, \mathbf{e}_1, \mathbf{e}_2$ denote vectors in $\mathbb{R}^5$. The polytope $\mathcal{P} = \mathcal{Q}_1 \times \mathcal{Q}_2$ has Ehrhart polynomial
\begin{small}
\[
i(\mathcal{P},t) = \frac{15067}{96} t^{9} + \frac{31681}{1440} t^{8} - \frac{259}{720} t^{7} + \frac{36763}{720} t^{6} - \frac{77363}{1440} t^{5} + \frac{36809}{1440} t^{4} - \frac{1313}{360} t^{3} - \frac{28}{45} t^{2} - \frac{13}{60} t + 1,
\]
\end{small}
realizing the second sign pattern.

\noindent\textit{Case 3.} For the final pattern, take
\[
\mathcal{P} = \mathcal{T}_{10} \times \mathcal{T}_{100} \times \operatorname{conv}\{(0,0,0),(1,0,0),(0,1,0),(2,2,1000)\}.
\]
The Ehrhart polynomial is
\begin{small}
\[
i(\mathcal{P},t) = \frac{125000}{27} t^{9} + \frac{27875}{9} t^{8} - \frac{52255}{18} t^{7} + \frac{6311}{9} t^{6} - \frac{8819}{18} t^{5} - \frac{21634}{9} t^{4} + \frac{7769}{54} t^{3} - \frac{83}{3} t^{2} - \frac{25}{2} t + 1,
\]
\end{small}
which yields the third sign pattern.

The existence of the required polytope for each case is thereby established.
\end{proof}

\section{Concluding Remark}

The five embedding theorems provide a partial resolution to Question \ref{QuestionHHTY}.
We note that Embedding Theorem I (Theorem \ref{thm_Embedding Theorem}) may alternatively be formulated as a corollary of Embedding Theorem \ref{Embedding-Them-II}.
Through application of these embedding theorems, in low dimensions one need only verify a limited number of sign patterns.
Consider a $d$-dimensional integral polytope $\mathcal{P}$ with Ehrhart polynomial
$i(\mathcal{P},t) = 1 + \sum_{i=1}^{d} c_i t^i$, where each coefficient $c_i \neq 0$.

For $d=10$, Theorem \ref{Fibonacci-Bound} implies that it suffices to verify the sign pattern of the Ehrhart coefficients matches one of the following:
\begin{align*}
&(\mathrm{sgn}(c_{8}),\mathrm{sgn}(c_7),\ldots,\mathrm{sgn}(c_2),\mathrm{sgn}(c_1)) = (-1,+1,-1,-1,-1,-1,-1,-1), \\
&(\mathrm{sgn}(c_{8}),\mathrm{sgn}(c_7),\ldots,\mathrm{sgn}(c_2),\mathrm{sgn}(c_1)) = (-1,+1,-1,-1,-1,+1,-1,-1), \\
&(\mathrm{sgn}(c_{8}),\mathrm{sgn}(c_7),\ldots,\mathrm{sgn}(c_2),\mathrm{sgn}(c_1)) = (-1,+1,-1,-1,+1,-1,-1,-1), \\
&(\mathrm{sgn}(c_{8}),\mathrm{sgn}(c_7),\ldots,\mathrm{sgn}(c_2),\mathrm{sgn}(c_1)) = (-1,+1,-1,+1,-1,-1,-1,-1), \\
&(\mathrm{sgn}(c_{8}),\mathrm{sgn}(c_7),\ldots,\mathrm{sgn}(c_2),\mathrm{sgn}(c_1)) = (-1,+1,-1,+1,-1,+1,-1,-1).
\end{align*}
This reduces the number of sign patterns requiring verification from $2^{10-2}=256$ (accounting for fixed signs of leading coefficients) to merely $5$.

Establishing additional embedding theorems presents an interesting direction for future research.
A related challenging problem involves the explicit construction of integral polytopes realizing the aforementioned sign patterns in low dimensions.
Question \ref{QuestionHHTY} remains open in full generality, and its complete resolution is highly anticipated.






\noindent
{\small \textbf{Acknowledgements:}}
This work is partially supported by the National Natural Science Foundation of China [12571355].


\begin{thebibliography}{99}

\bibitem{BeckRobins} M. Beck and S. Robins, \emph{Computing the continuous discretely, in: Integer-Point Enumeration in Polyhedra}, second edition, Undergraduate Texts in Mathematics. Springer, New York, 2015.

\bibitem{Smoothpolytopes} F. Castillo, F. Liu, B. Nill, and A. Paffenholz, \emph{Smooth polytopes with negative Ehrhart ceofficients}, J. Combin Theory, Series A. 160 (2018), 316--331.

\bibitem{DeLoera04} J. A. De Loera, R. Hemmecke, J. Tauzer, and R. Yoshida, \emph{Effective lattice point counting in rational convex polytopes}, J. Symbolic Comput. 38 (2004), 1273--1302.

\bibitem{Ehrhart62} E. Ehrhart, \emph{Sur les polyh\'edres rationnels homoth\'etiques \'a $n$ dimensions}, C. R. Acad Sci. Paris. 254 (1962), 616--618.

\bibitem{Erdos1953} P. Erd\"os, \emph{On a conjecture of Hammersley}, J. London Math. Soc. 28 (1953), 232--236.

\bibitem{Hypersimplices} L. Ferroni, \emph{Hypersimplices are Ehrhart positive}, J. Combin Theory, Series A. 178 (2021), 105365.

\bibitem{MinimalMatroids} L. Ferroni, \emph{On the Ehrhart polynomial of minimal matroids}, Discrete Comput. Geom. 68 (2022), 255--273.

\bibitem{MatroidsPolytope} L. Ferroni, \emph{Matroids are not Ehrhart positive}, Adv. Math. 402 (2022), 108337.

\bibitem{Rank-Two-Matroids} L. Ferroni, K. Jochemko, and B. Schr\"oter, \emph{Ehrhart polynomials of rank two matroids}, Adv. Appl. Math. 141 (2022), 102410.

\bibitem{Ferroni23} L. Ferroni and A. Higashitani, \emph{Examples and counterexamples in Ehrhart Theory}, EMS Surv. Math. Sci. (2024).

\bibitem{Hammersley} J. M. Hammersley, \emph{The sums of products of the natural numbers}, Proc. London Math. Soc. Third Series. 1 (1951), 435--452.

\bibitem{Hardy-Littlewood} G. H. Hardy, J. E. Littlewood, and G. P\'olya, \emph{Inequalities}, Cambridge University Press, Cambridge. (1952).

\bibitem{Hibi-Higashitani-Tsuchiya-Yoshida} T. Hibi, A. Higashitani, A. Tsuchiya, and K. Yoshida, \emph{Ehrhart polynomials with negative coefficients}, Graphs Combin. 35 (2019), 363--371.

\bibitem{LiuFuA-M} F. Liu, \emph{Higher integrality conditions, volumes and Ehrhart polynomials}, Adv. Math. 226(4) (2011), 3467--3494.

\bibitem{FuLiu19} F. Liu, \emph{On positivity of Ehrhart polynomials}, Recent trends in algebraic combinatorics, Assoc. Women Math Ser., vol. 16, Springer, Cham, (2019), 189--237.

\bibitem{LiuTsuchiya19} F. Liu and A. Tsuchiya, \emph{Stanley's non-Ehrhart-positive order polytopes}, Adv. Appl. Math. 108 (2019), 1--10.

\bibitem{LiuXinZhang} F. Liu, G. Xin, and Z. Zhang, \emph{Order polytopes of dimension $\leq 13$ are Ehrhart positive}, Discrete Math. 349 (2026), 114833.


\bibitem{McMullen77} P. McMullen, \emph{Valuations and Euler-type relations on certain classes of convex polytopes}, Proc. London Math. Soc. 3(35) (1977), 113--135.

\bibitem{PostnikovIMRN} A. Postnikov, \emph{Permutohedra, associahedra, and beyond}, Int. Math. Res. Not. IMRN (6) (2009), 1026--1106.

\bibitem{Reeve} J. E. Reeve, \emph{On the volume of lattice polyhedra}, Proc. London Math. Soc. (3)7 (1957), 378--395.

\bibitem{StanleyLog-concave} R. P. Stanley, \emph{Log-concave and unimodal sequences in algebra, combinatorics, and geometry}, Ann. New York Acad Sci. 576 (1989), 500--535.

\bibitem{RP.Stanley} R. P. Stanley, \emph{Enumerative Combinatorics (volume 1)}, Cambridge Studies in Advanced Mathematics, vol. 49, Cambridge University Press,  (2012).

\bibitem{Stanley-Pitman} R. P. Stanley and J. Pitman, \emph{A polytope related to empirical distributions, plane trees, parking functions, and the associahedron}, Discrete Comput. Geom. 27(4) (2002), 603--634.

\end{thebibliography}
\end{document}